\begin{document}

 \renewcommand*{\backref}[1]{}
\renewcommand*{\backrefalt}[4]{%
    \ifcase #1 (Not cited.)%
    \or        (p.\,#2)%
    \else      (pp.\,#2)%
    \fi}     
    
\newtheorem{theorem}{Theorem}
\newtheorem{lemma}[theorem]{Lemma}
\newtheorem{example}[theorem]{Example}
\newtheorem{algol}{Algorithm}
\newtheorem{corollary}[theorem]{Corollary}
\newtheorem{prop}[theorem]{Proposition}
\newtheorem{definition}[theorem]{Definition}
\newtheorem{question}[theorem]{Question}
\newtheorem{problem}[theorem]{Problem}
\newtheorem{remark}[theorem]{Remark}
\newtheorem{conjecture}[theorem]{Conjecture}

\newcommand{\commI}[1]{\marginpar{%
\begin{color}{magenta}
\vskip-\baselineskip 
\raggedright\footnotesize
\itshape\hrule \smallskip I: #1\par\smallskip\hrule\end{color}}}

\newcommand{\commM}[1]{\marginpar{%
\begin{color}{red}
\vskip-\baselineskip 
\raggedright\footnotesize
\itshape\hrule \smallskip M: \bf #1\par\smallskip\hrule\end{color}}}

\newcommand{\commA}[1]{\marginpar{%
\begin{color}{blue}
\vskip-\baselineskip 
\raggedright\footnotesize
\itshape\hrule \smallskip #1\par\smallskip\hrule\end{color}}}

\def\xxx{\vskip5pt\hrule\vskip5pt}

\def\Cmt#1{\underline{{\sl Comments:}} {\it{#1}}}

\newcommand{\Modp}[1]{
\begin{color}{blue}
 #1\end{color}}


\def\cA{{\mathcal A}}
\def\cB{{\mathcal B}}
\def\cC{{\mathcal C}}
\def\cD{{\mathcal D}}
\def\cE{{\mathcal E}}
\def\cF{{\mathcal F}}
\def\cG{{\mathcal G}}
\def\cH{{\mathcal H}}
\def\cI{{\mathcal I}}
\def\cJ{{\mathcal J}}
\def\cK{{\mathcal K}}
\def\cL{{\mathcal L}}
\def\cM{{\mathcal M}}
\def\cN{{\mathcal N}}
\def\cO{{\mathcal O}}
\def\cP{{\mathcal P}}
\def\cQ{{\mathcal Q}}
\def\cR{{\mathcal R}}
\def\cS{{\mathcal S}}
\def\cT{{\mathcal T}}
\def\cU{{\mathcal U}}
\def\cV{{\mathcal V}}
\def\cW{{\mathcal W}}
\def\cX{{\mathcal X}}
\def\cY{{\mathcal Y}}
\def\cZ{{\mathcal Z}}

\def\C{\mathbb{C}}
\def\F{\mathbb{F}}
\def\K{\mathbb{K}}
\def\L{\mathbb{L}}
\def\G{\mathbb{G}}
\def\Z{\mathbb{Z}}
\def\R{\mathbb{R}}
\def\Q{\mathbb{Q}}
\def\N{\mathbb{N}}
\def\M{\textsf{M}}
\def\U{\mathbb{U}}
\def\P{\mathbb{P}}
\def\A{\mathbb{A}}
\def\fp{\mathfrak{p}}
\def\n{\mathfrak{n}}
\def\X{\mathcal{X}}
\def\x{\textrm{\bf x}}
\def\w{\textrm{\bf w}}
\def\ovQ{\overline{\Q}}
\def \Kab{\K^{\mathrm{ab}}}
\def \Qab{\Q^{\mathrm{ab}}}
\def \Qtr{\Q^{\mathrm{tr}}}
\def \Kc{\K^{\mathrm{c}}}
\def \Qc{\Q^{\mathrm{c}}}
\def\rank#1{\mathrm{rank}#1}
\def\ZK{\Z_\K}
\def\ZKS{\Z_{\K,\cS}}
\def\ZKSf{\Z_{\K,\cS_f}}

\def\({\left(}
\def\){\right)}
\def\[{\left[}
\def\]{\right]}
\def\<{\langle}
\def\>{\rangle}

\def\gen#1{{\left\langle#1\right\rangle}}
\def\genp#1{{\left\langle#1\right\rangle}_p}
\def\genPs{{\left\langle P_1, \ldots, P_s\right\rangle}}
\def\genPsp{{\left\langle P_1, \ldots, P_s\right\rangle}_p}

\def\e{e}

\def\eq{\e_q}
\def\fh{{\mathfrak h}}

\def\lcm{{\mathrm{lcm}}\,}

\def\({\left(}
\def\){\right)}
\def\fl#1{\left\lfloor#1\right\rfloor}
\def\rf#1{\left\lceil#1\right\rceil}
\def\mand{\qquad\mbox{and}\qquad}

\def\jt{\tilde\jmath}
\def\ellmax{\ell_{\rm max}}
\def\llog{\log\log}

\def\m{{\rm m}}
\def\ch{\hat{h}}
\def\GL{{\rm GL}}
\def\Orb{\mathrm{Orb}}
\def\Per{\mathrm{Per}}
\def\Preper{\mathrm{Preper}}
\def \S{\mathcal{S}}
\def\vec#1{\mathbf{#1}}
\def\ov#1{{\overline{#1}}}
\def\Gal{{\mathrm Gal}}
\def\Sp{{\mathrm S}}
\def\tors{\mathrm{tors}}
\def\PGL{\mathrm{PGL}}
\def\wH{{\rm H}}
\def\Gm{\G_{\rm m}}

\newcommand{\bfalpha}{{\boldsymbol{\alpha}}}
\newcommand{\bfomega}{{\boldsymbol{\omega}}}

\newcommand{\Ch}{{\operatorname{Ch}}}
\newcommand{\Elim}{{\operatorname{Elim}}}
\newcommand{\proj}{{\operatorname{proj}}}
\newcommand{\h}{{\operatorname{\mathrm{h}}}}
\newcommand{\ord}{\operatorname{ord}}

\newcommand{\hh}{\mathrm{h}}
\newcommand{\aff}{\mathrm{aff}}
\newcommand{\Spec}{{\operatorname{Spec}}}
\newcommand{\Res}{{\operatorname{Res}}}

\def\fA{{\mathfrak A}}
\def\fB{{\mathfrak B}}

\def\bphi{\pmb{\varphi}}

\def\house#1{{%
    \setbox0=\hbox{$#1$}
    \vrule height \dimexpr\ht0+1.4pt width .5pt depth \dp0\relax
    \vrule height \dimexpr\ht0+1.4pt width \dimexpr\wd0+2pt depth \dimexpr-\ht0-1pt\relax
    \llap{$#1$\kern1pt}
    \vrule height \dimexpr\ht0+1.4pt width .5pt depth \dp0\relax}}

\numberwithin{equation}{section}
\numberwithin{theorem}{section}

\title[Multiplicative dependence of rational functions]
{On multiplicative dependence of values of rational functions and a generalisation 
of the Northcott theorem}

\author[A. Ostafe] {Alina Ostafe}
\address{School of Mathematics and Statistics, University of New South Wales, Sydney NSW 2052, Australia}
\email{alina.ostafe@unsw.edu.au}

\author[M. Sha]{Min Sha}
\address{Department of Computing, Macquarie University, Sydney, NSW 2109,
Australia}
\email{shamin2010@gmail.com}

\author[I. Shparlinski] {Igor E. Shparlinski}
\address{School of Mathematics and Statistics, University of New South Wales, Sydney NSW 2052, Australia}
\email{igor.shparlinski@unsw.edu.au}

\author[U. Zannier]{Umberto Zannier}
\address{Scuola Normale Superiore, Piazza dei Cavalieri, 7, 56126 Pisa, Italy}
\email{u.zannier@sns.it}

\subjclass[2010]{11R18, 37F10}

\keywords{Multiplicative dependence, curve of genus zero, rational value, iteration}

\begin{abstract} 
In this paper we study multiplicative dependence of values of polynomials or rational functions over a number field. 
As an application, we obtain new results on multiplicative dependence in the orbits of a univariate 
polynomial dynamical system. We also obtain a broad generalisation of the  Northcott theorem
replacing the finiteness of preperiodic points from a given number field  by   the finiteness of initial points 
with two multiplicatively dependent elements in their orbits.  
\end{abstract}

\maketitle

\section{Introduction}
\subsection{Motivation and background} 

We say that non-zero complex numbers $\alpha_1,\ldots,\alpha_n$ are \textit{multiplicatively dependent} 
if there exist integers $k_1,\ldots,k_n$, not all zero, such that 
$$
\alpha_1^{k_1} \cdots \alpha_n^{k_n}=1. 
$$
Consequently, a point in the complex space $\C^n$ is called \textit{multiplicatively dependent} if its coordinates are all non-zero and  are multiplicatively dependent. 

The same definition of multiplicative dependence applies to rational functions as well. 
Moreover, we say that rational functions $\varphi_1,\ldots,\varphi_s\in\C(X)$ are   \textit{multiplicatively independent modulo constants} 
if there is no non-zero integer vector $(k_1,\ldots,k_s)$ such that 
$$
\varphi_1^{k_1} \cdots \varphi_s^{k_s} \in \C^*.
$$

We also use  $\Gm$ to denote  the multiplicative algebraic group, that is $\Gm = \ov \Q^*$ endowed with the multiplicative group law, 
where as usual $ \ov \Q$ denotes the algebraic closure of $\Q$. 
The study of intersections of geometrically irreducible  algebraic  curves $\cX \subseteq \Gm^n$, defined over a number field $\K$, and  a union of proper algebraic subgroups of $\Gm^n$ has been initiated by
Bombieri, Masser and  Zannier~\cite{BMZ} (see also~\cite{Zan2}).
It is well known (see, for example,~\cite[Corollary~3.2.15]{BoGu}) that  each such subgroup of $\Gm^n$ is defined by a finite set of equations of the shape $X_1^{k_1}\cdots X_n^{k_n}=1$, with integer exponents not all zero. 
That is, the work~\cite{BMZ} is about multiplicative dependence of points on a curve. 
It has been proved in~\cite[Theorem~1]{BMZ}  that, under the assumption that {\it $\cX$ is not contained in
any translate of a proper algebraic subgroup of $\Gm^n$}, the multiplicatively dependent points on $\cX(\overline{\Q})$ form a set of bounded (absolute logarithmic) Weil  height. 

Most recently, a new point of view has been introduced in~\cite{OSSZ}. 
In particular,  the structure of  multiplicatively dependent points on 
$\cX(\Kab)$ has been established in~\cite{OSSZ},  where $\Kab$ is the maximal abelian extension of a number field $\K$. 
In turn this implies that the set of such points is finite if $\cX$ is of positive genus 
(see~\cite[Remark 2.4]{OSSZ}\footnote{We take this opportunity to indicate that
the justification given in~\cite[Remark 2.4]{OSSZ} mentions the infinitude of the
automorphism group of $\Gm$ while  in fact it contains only two automorphisms; 
however  the claim itself is correct as the simple comparison of the genera of $\Gm$ and $\cX$ shows.}). However, other than the structure, this result does not tell us further information about the case when $\cX$ is of genus zero. 
Here we  address this issue  and give several applications to algebraic dynamical systems. 

In algebraic geometry (see, for instance,~\cite[Section~1.2]{KSC}), a rational curve defined over a field $\F$ is a curve birationally isomorphic to the projective line $\P^1$. 
If $\F$ is algebraically closed, this is equivalent to a curve of genus zero. 
Moreover, each rational curve can be parametrized in the form $(\varphi_1(X),\ldots,\varphi_s(X))$, where $\varphi_i\in \F(X)$ 
are rational functions, not all constant. Conversely, each curve of this form represents a rational curve. 
So, when considering multiplicatively dependent points on a curve of genus zero, we essentially study 
multiplicative dependence in values of some rational functions, which is exactly the topic of this paper. 
Since we only discuss multiplicative dependence for non-zero complex numbers, 
this setting automatically excludes poles and zeros of rational functions. 
See also~\cite{DubSha} for the case of translations of algebraic numbers. 

This work is also partially motivated by  a series of recent results on the distribution of roots of unity and, 
more generally, of  algebraic numbers of bounded house (we refer to Section~\ref{sec:def} for a precise definition) in orbits 
of rational functions.  
Here, we consider a broad generalisation of such problems and in particular  study the 
multiplicative dependence  of several consecutive iterations $\varphi^{(n+1)}(\alpha), \ldots, \varphi^{(n+s)}(\alpha)$.

Recall that for a rational function $\varphi\in\K(X)$, the $n$-th iterate  $\varphi^{(n)}$ of $\varphi$ is recursively defined by
$$
\varphi^{(0)}=X,\mand \varphi^{(n)}=\varphi\(\varphi^{(n-1)}\),\quad n\ge 1.
$$ 
For an element $\alpha\in \ov\Q$ we define the {\it orbit} of $\varphi$ at $\alpha$ as the set 
\begin{equation}
\label{eq:Orb h}
\Orb_\varphi(\alpha)= \{u_n: \, u_0=\alpha, \, u_n =\varphi(u_{n-1}),\,  n=1,2, \ldots \}. 
\end{equation}

\begin{remark} 
{\rm
The  iterations in the orbit $\Orb_\varphi(\alpha)$  are defined until some point $u_{n-1}$ hits a 
pole of $\varphi$. Moreover, if some point $u_{n}$, $n \ge 1$, in~\eqref{eq:Orb h} is defined, then
$\alpha$ is not a pole of $\varphi^{(n)}$ and $u_{n}=\varphi^{(n)}(\alpha)$. However, 
  the converse is not true, and the fact that the evaluation $\varphi^{(n)}(\alpha)$ is defined does not imply the existence of
$u_{n}$, since  it  is defined if and only if all the previous points $u_0, \ldots, u_{n-1}$ 
of the orbit~\eqref{eq:Orb h} are defined and $u_{n-1}$ is not a pole of $\varphi$.  
Clearly, for polynomial systems this distinction does not exist. 
}
\end{remark} 

\begin{definition} [\textit{Periodic and preperiodic points}]
\label{def:pep_preper}
{\rm
We say that $\alpha \in \overline{\Q}$ 
is a \textit{periodic point} for $\varphi$ if  $\varphi^{(n)}(\alpha)=\alpha$ for some positive 
integer $n$,   and we say that $\alpha$ is a \textit{preperiodic point}  for $\varphi$ if $\varphi^{(m)}(\alpha)$ is a periodic point for some positive integer $m$. 
}
\end{definition}

The  famous {\it Northcott theorem\/}, see~\cite[Theorem~3.12]{Silv}, 
gives the finiteness of preperiodic points for $\varphi$ contained in a number field and of bounded Weil height when $\varphi$ is of degree  at least 2. 
Here we obtain an extension of this 
finiteness result in a new direction, which involves the notion of \textit{multiplicative dependence}
instead of \textit{equality}.

\subsection{General notation and convention} 
\label{sec:def}

Throughout the paper, we use the following notation:
\begin{itemize}
\item[] $\ovQ$: the  algebraic  closure of the rational numbers $\Q$;
\item[ ] $\U$: the set of all roots of unity in the complex numbers $\C$; 
\item[] $\K$: an algebraic number field;
\item[] $d_{\K}$: the degree over $\Q$ of the number field $\K$;
\item[]  $\ZK$: the ring of integers of $\K$;
\item[] $\Kc=\K(\U)$: the maximal cyclotomic extension of  $\K$; 
\item[] $\Kab$: the maximal abelian extension of $\K$; 
\item[] $\bphi=(\varphi_1,\ldots,\varphi_s) \in \C(X)^s$: a vector of rational functions; 
\item[] $\bphi(\alpha)=(\varphi_1(\alpha),\ldots,\varphi_s(\alpha)) \in \C^s$ for $\alpha \in \C$. 
\end{itemize}

We recall that by the  {\it Kronecker--Weber\/} theorem, 
 $\Qc = \Qab$, see~\cite[Chapter~14]{Wash}.  However, generally we can 
 only claim $\Kc \subseteq \Kab$.

We  reserve $|\alpha|$ for  the usual  absolute value of  $\alpha \in \C$ and use
$\house{\strut\alpha}$ for the {\it house\/} of $\alpha$, which is the maximum of absolute values $|\sigma(\alpha)|$ 
of the conjugates $\sigma(\alpha)$ over $\Q$ of   $\alpha\in \ovQ$.

{\it Height\/} always means the {\it absolute logarithmic Weil height\/}  which we denote by $\h(\alpha)$ for non-zero   $\alpha\in \ovQ$,
see~\cite{BoGu,Zan1}. 

For a rational function $\varphi\in\K(X)$ with $\varphi=f/g$, $f,g\in \K[X]$ and $\gcd(f,g)=1$, 
we define the \textit{degree} of $\varphi$, denoted by $\deg \varphi$, to be $\max\{\deg f, \deg g\}$. 
We say that $\varphi$ is \textit{monic}, if both $f$ and $g$ are monic.

\begin{definition} [\textit{Special rational functions}]
\label{def:Srf}
{\rm
We say that a rational function $\varphi\in\K(X)$ of degree $d$  is \textit{special} if $\varphi$ is a conjugate, with respect to the conjugation action given by $\mathrm{PGL}_2(\K)$ on $\K(X)$, 
either to $\pm X^d$ or to $\pm T_d(X)$. 
}
\end{definition}

Here, we use $T_d$ to denote the Chebyshev polynomial  of degree $d$ which is uniquely defined
 by the functional equation $T_d(X+X^{-1})=X^d+X^{-d}$.

Throughout the paper, we use the Landau symbol $O$ and the Vinogradov symbol $\ll$. Recall that the
assertions $U=O(V)$ and $U \ll V$  are both equivalent to the inequality $|U|\le cV$ with some absolute constant $c>0$.
  To emphasise the dependence of the implied
constant $c$ on some parameter (or a list of parameters) $\rho$, we write $U=O_{\rho}(V)$
or $U \ll_{\rho} V$.

\subsection{Main results and methods} 
\label{sec:res}

First, we show that under rather natural conditions 
on $\bphi \in \K(X)^s$ the point $\bphi(\alpha)$ 
is multiplicatively independent for all but finitely many elements
$\alpha \in \Kab$, see Theorem~\ref{thm:mult-Kc} below.
 
Then, in Section~\ref{sec:mult dep orb} we establish several 
results about multiplicative independence of $s\ge 1$ consecutive 
elements in an orbit of a polynomial  for all but finitely many 
initial values $\alpha \in \Kc$. These give a very broad generalisation 
of previously known results on roots of unity in orbits (which corresponds to
$s=1$). 

In Section~\ref{sec:mult dep arb} we investigate the question of 
multiplicative dependence of pairs of elements (not necessary consecutive) in orbits. 
More precisely, we prove in Theorem~\ref{thm: 2multdep-arb} that 
for a polynomial $f\in\K[X]$ of degree at least two and without multiple roots, 
 there are only finitely many elements $\alpha \in \K$ such that for some  distinct integers $ m, n\ge 0$
the values $f^{(m)}(\alpha)$ and $f^{(n)}(\alpha)$ are multiplicatively dependent. 
In particular, this can be considered as an 
extension of the {\it Northcott theorem\/}, see~\cite[Theorem~3.12]{Silv}, 
about the finiteness of preperiodic points (clearly $f^{(m)}(\alpha)  = f^{(n)}(\alpha)$ can be considered 
as a very special instance of multiplicative dependence).

The proofs of the above results rest  on Section~\ref{sec:prelim}, where we collect 
several general statements about polynomials and their iterations, 
and on Section~\ref{sec:arith prop} where we present more specialised  auxiliary results,
some of  which are new and maybe of independent interest.

The results of Section~\ref{sec:mult dep orb} are based on a combination of several ideas. 
First we need to record a  rather precise description of the structure of multiplicatively dependent 
values of rational functions in elements from $\Kab$,  see Lemma~\ref{lem:mult-Kab} below, complementing~\cite[Theorem~2.1]{OSSZ}.
The proof is based on the ideas of Bombieri, Masser and Zannier~\cite{BMZ} and
is similar to those in~\cite{OSSZ}.  This description is then combined with  an argument of 
Dvornicich and Zannier~\cite{DZ}, which has recently also been used in~\cite{Ost}. 
However, compared to the orginial scheme of~\cite{DZ} now somewhat simpler
argument is possible, thanks to the results of Fuchs and Zannier~\cite{FZ}, which in turn 
extend previous results of  Zannier~\cite{Zan0} from Laurent polynomials to arbitrary 
rational functions.  

The main result of Section~\ref{sec:mult dep arb}, that is, Theorem~\ref{thm: 2multdep-arb}
is based on some classical Diophantine techniques. More precisely, we use results about 
the finiteness of perfect powers amongst  polynomial values, which are due to B{\'e}rczes,   Evertse and Gy{\" o}ry~\cite{BEG}, combined with the celebrated result of Faltings~\cite{Fa83} on the finiteness of rational points on a plane curve of genus $g>1$.

\subsection{Further generalisations}

It is easy to see that some of our results can be extended to any field  with  
the Bogomolov property, that is, 
fields $\L \subseteq \ovQ$ for which   there exists a  constant $c_\L> 0$, such that for any 
non-zero $\alpha \in \L\setminus \U$ we 
have $\h(\alpha) \ge c_\L$. In particular, from~\cite[Theorem~1.2]{AmZa} it follows that  $\Kab$ has
the Bogomolov property, see~\cite{ADZ,Gal,Giz,Hab} for non-abelian examples of such fields and
further references.

\section{Preliminaries}
\label{sec:prelim}

\subsection{Bounding exponents in multiplicative relations} 

One of our main tools is the following result obtained in the proof of~\cite[Theorem~2.1]{OSSZ}, which follows the same approach as in~\cite[Theorem~1]{BMZ}.
It can be seen as a generalisation of Loxton and van der Poorten's result~\cite{Loxton, Poorten} on bounding exponents of multiplicative relations of algebraic numbers. 

\begin{lemma}
\label{lem:exponent}
Let $\bphi=(\varphi_1,\ldots,\varphi_s) \in \K(X)^s$ whose components are multiplicatively independent modulo constants. 
Then, for any $\alpha \in \Kab$ such that the point 
$\bphi(\alpha)$ is  multiplicatively dependent, there exist integers $k_1,\ldots,k_s$, not all zero, satisfying
\begin{equation*} 
\max|k_i| \ll_{d_\K,\bphi} 1,
\end{equation*}
and such that
\begin{equation*}
\varphi_1(\alpha)^{k_1}\cdots \varphi_s(\alpha)^{k_s}=\zeta 
\end{equation*}
for some root of unity $\zeta\in\U\cap \K(\alpha)$.
\end{lemma}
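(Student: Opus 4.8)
The strategy I would follow is the one pioneered by Bombieri, Masser and Zannier in~\cite{BMZ}, adapted to the function-field setting exactly as in~\cite{OSSZ}. Consider the map $\bphi \colon \P^1 \to \Gm^s$, $X \mapsto (\varphi_1(X),\ldots,\varphi_s(X))$, whose image is a geometrically irreducible curve $\cX \subseteq \Gm^s$. The hypothesis that $\varphi_1,\ldots,\varphi_s$ are multiplicatively independent modulo constants says precisely that $\cX$ is not contained in any proper algebraic \emph{subgroup} of $\Gm^s$: indeed a defining equation $X_1^{k_1}\cdots X_s^{k_s}=1$ of such a subgroup pulls back to $\varphi_1^{k_1}\cdots\varphi_s^{k_s}=1$ on $\cX$, forcing a nontrivial multiplicative relation among the $\varphi_i$. (One should be slightly careful to distinguish ``lies in a subgroup'' from ``lies in a translate of a subgroup'': only the former is what we need and what the hypothesis gives.) Whenever $\bphi(\alpha)$ is multiplicatively dependent, the point $\bphi(\alpha)$ lies on $\cX$ and also on some proper subgroup $H$ of $\Gm^s$; our task is to bound the ``complexity'' of the relation cutting out a suitable such $H$, uniformly in $\alpha \in \Kab$.

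The core of the argument is a height/degree count. Fix $\alpha \in \Kab$ with $\bphi(\alpha)$ multiplicatively dependent, so there is a nonzero $(k_1,\ldots,k_s)\in\Z^s$ with $\prod_i \varphi_i(\alpha)^{k_i}=1$; among all such relations choose one of minimal Euclidean norm, and set $\w = (k_1,\ldots,k_s)$. The key point, as in~\cite[Theorem~1]{BMZ} and~\cite{OSSZ}, is that the intersection of $\cX$ with the hyperplane-type subvariety $\{x^{\w}=1\}$ is \emph{proper} (again because $\cX$ is not inside a proper subgroup), hence zero-dimensional, so $\bphi(\alpha)$ is one of $O_{\bphi}(\|\w\|)$ points on $\cX$ of bounded degree; by an arithmetic B\'ezout / height-of-intersection estimate (e.g.\ via~\cite{BoGu}) its Weil height satisfies $\h(\bphi(\alpha)) \ll_{\bphi} \|\w\|$, while simultaneously, since $\bphi$ has bounded degree, $\h(\bphi(\alpha)) \gg_{\bphi} \h(\alpha) - O_{\bphi}(1)$ for $\alpha$ outside a controlled finite set. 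On the other hand, because $\alpha \in \Kab$ has the Bogomolov property~\cite[Theorem~1.2]{AmZa} (or because one is dealing with a field of bounded local degrees), any $\varphi_i(\alpha)$ that is \emph{not} a root of unity has height bounded below by an absolute constant, and a Siegel-type pigeonhole argument in the lattice of relations then lets one trade the single relation $\w$ for one with $\max|k_i| \ll_{d_\K,\bphi} 1$: roughly, if $\|\w\|$ were large one could, using that the ``free part'' of the group generated by the $\varphi_i(\alpha)$ has rank $\le s$ and bounded height, find a shorter relation, contradicting minimality. This is the step I expect to be the main obstacle: making the interplay between the geometric bound $\h(\bphi(\alpha))\ll\|\w\|$, the Bogomolov lower bound, and the minimality of $\w$ yield a bound on $\max|k_i|$ depending only on $d_\K$ and $\bphi$ and not on $\alpha$ — this is exactly the delicate uniform estimate carried out inside the proof of~\cite[Theorem~2.1]{OSSZ}, and I would simply invoke that computation rather than redo it.

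Finally, once $\max|k_i| \ll_{d_\K,\bphi} 1$, it remains to identify the value of the product as a root of unity in $\K(\alpha)$. From $\prod_i \varphi_i(\alpha)^{k_i}=1$ with the small exponents, set $\zeta := \prod_i \varphi_i(\alpha)^{k_i'}$ for the (possibly rescaled) relation actually produced by the argument; since this lies in $\K(\alpha)$ by construction (the $\varphi_i$ have coefficients in $\K$) and has, after the reduction, absolute logarithmic Weil height equal to $0$ — each conjugate having absolute value $1$, again by the argument bounding the height of $\bphi(\alpha)$ on the zero-dimensional intersection combined with the product relation — Kronecker's theorem forces $\zeta \in \U$. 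Hence $\zeta \in \U \cap \K(\alpha)$, completing the proof. In writing this up I would phrase the whole thing as ``this is established in the course of the proof of~\cite[Theorem~2.1]{OSSZ}, following~\cite[Theorem~1]{BMZ} and generalising~\cite{Loxton,Poorten}'', and include only the short argument extracting the root-of-unity conclusion, since the quantitative heart is not new.
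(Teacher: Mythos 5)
The paper does not give a standalone proof of this lemma; it simply records it as having been obtained in the course of the proof of \cite[Theorem~2.1]{OSSZ}, which in turn follows \cite[Theorem~1]{BMZ}. Your proposal ultimately defers to the same source, and your sketch of the BMZ/OSSZ argument (proper intersection with the subgroup, arithmetic B\'ezout height bound, the Bogomolov property of $\Kab$ via \cite[Theorem~1.2]{AmZa}, a Minkowski/pigeonhole reduction to short relations, and Kronecker to identify the torsion value) is in the right spirit. So the approach is essentially the paper's.

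There is, however, one genuine conceptual error: the parenthetical about subgroups versus translates is backwards on both counts. The hypothesis that $\varphi_1,\ldots,\varphi_s$ are multiplicatively independent \emph{modulo constants} translates geometrically into $\cX$ not being contained in any \emph{translate} (coset) of a proper algebraic subgroup of $\Gm^s$, since a relation $\varphi_1^{k_1}\cdots\varphi_s^{k_s}=c$ with $c$ a nonzero constant places $\cX$ inside the coset $\{X_1^{k_1}\cdots X_s^{k_s}=c\}$ of the subgroup $\{X_1^{k_1}\cdots X_s^{k_s}=1\}$. Plain multiplicative independence (no identical relation equal to $1$) is the weaker notion corresponding to $\cX$ merely avoiding proper subgroups. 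Thus the lemma's hypothesis gives the stronger condition ``not in a translate'', which is precisely the hypothesis required by \cite[Theorem~1]{BMZ}, and it is genuinely needed: the paper's own remark and the example immediately following the lemma, with $\varphi_1=X+1$, $\varphi_2=X-1$, $\varphi_3=2(X^2-1)$, exhibit functions that are multiplicatively independent but not independent modulo constants and for which the exponent bound fails as $\alpha$ varies. If one really only had the weaker geometric condition, as your parenthetical asserts, the appeal to \cite[Theorem~1]{BMZ} would be unjustified and the conclusion false; you should fix that sentence so the rest of your (otherwise sound) outline is not built on a mischaracterisation of the hypothesis.
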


We remark that the necessary condition ``multiplicatively independent modulo constants'' in Lemma~\ref{lem:exponent} comes from~\cite[Theorem~1 and Theorem~1']{BMZ}; 
see also~\cite[Remark~2.6]{OSSZ}. 
Here, we present a simple example. 
Take $\varphi_1=X+1, \varphi_2= X-1, \varphi_3=2(X^2-1)$, then $\varphi_1, \varphi_2, \varphi_3$ are multiplicatively independent 
but they are not multiplicatively independent modulo constants. 
For any integer $m \ge 2$, let $\alpha_m = 2^m-1$. Then, we have the multiplicative dependence relation
$$
\varphi_1(\alpha_m)^{-(m+1)} \varphi_2(\alpha_m)^{-m} \varphi_3(\alpha_m)^m = 1, 
$$
where the absolute values of the exponents go to  infinity as $m$ goes to  infinity.

\subsection{Hilbert's Irreducibility Theorem over $\Kc$} 
We need the following result due to Dvornicich and Zannier~\cite[Corollary~1]{DZ}. We present it however in a weaker form as in~\cite[Lemma~2.1]{Ost} that is needed for our purpose, but the proof is given within the proof of~\cite[Corollary~1]{DZ}.

\begin{lemma}
\label{lem:cor1DZ}
Let $f\in \Kc[X,Y]$ be such that $f(X,Y^m)$ as a polynomial in X does not have a root in $\Kc(Y)$ for all positive integers $m\le \deg_X f$. 
Then, $f(X,\zeta)$ has a root in $\Kc$ for only finitely many roots of unity $\zeta$.
\end{lemma}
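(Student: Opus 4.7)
The plan is to prove the contrapositive: I would show that if $f(X, \zeta)$ has a root in $\Kc$ for infinitely many roots of unity $\zeta$, then there exists some positive integer $m \le \deg_X f$ such that $f(X, Y^m)$ has a root in $\Kc(Y)$.

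First I would reduce to the case that $f \in \Kc[X,Y]$ is irreducible. If $f = \prod_i g_i$ is the factorisation into $\Kc$-irreducible factors, then any $\Kc$-root of $f(X, \zeta)$ is a $\Kc$-root of some $g_i(X, \zeta)$, and a pigeonhole argument isolates one factor $g_i$ for which the bad set of $\zeta$ is still infinite. Since $g_i(X, Y^m)$ divides $f(X, Y^m)$, a linear factor of the former over $\Kc(Y)$ would yield one of $f(X, Y^m)$; hence the hypothesis of the lemma restricts to $g_i$, and we may work with an irreducible $f$ of degree $d = \deg_X f$.

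Next I would set up a Galois framework. Choose a finite cyclotomic extension $\K' \subseteq \Kc$ of $\K$ containing all coefficients of $f$, so that the curve $C\colon f(X,Y) = 0$ is defined over $\K'$. For each $\zeta$ in the assumed infinite bad set $S$, with $(\alpha_\zeta, \zeta) \in C(\Kc)$, the Galois conjugates $(\sigma(\alpha_\zeta), \zeta^{a_\sigma})$ for $\sigma \in \Gal(\Kc/\K')$ remain $\Kc$-points of $C$; here $\zeta^{a_\sigma}$ runs through the $\Gal(\Kc/\K')$-orbit of $\zeta$, which is large whenever $\zeta$ has sufficiently large order. This yields a dense supply of $\Kc$-rational points of $C$ with $Y$-coordinate a root of unity.

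The main step, and the principal obstacle, is to convert this abundance into a generic section, via the Puiseux expansion of the algebraic function $X(Y)$ defined by $f(X,Y)=0$. At all but finitely many places $Y = y_0$ of $\P^1_\Kc$, the $d$ branches of $\alpha$ group into Puiseux series in $(Y-y_0)^{1/e_i}$ with $\sum_i e_i = d$, and a $\Kc$-rational root of $f(X, \zeta)$ corresponds to a branch with $e_i = 1$ and value in $\Kc$. Following the strategy of Dvornicich--Zannier, I would argue that the Galois compatibilities established above, combined with infinitely many such unramified $\Kc$-rational specialisations over roots of unity, force one branch of $\alpha$ to extend globally to a rational function of some $Y^{1/m}$ with $m \le d$. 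Equivalently, $f(R(Y), Y^m) \equiv 0$ for some $R \in \Kc(Y)$, contradicting the hypothesis. The delicate point is precisely this passage: one must analyse how the cyclotomic Galois action pieces together the local Puiseux branches into a single globally defined rational function, and rule out the possibility that the infinitely many local sections correspond to incompatible branches.
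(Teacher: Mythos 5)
The paper does not prove this lemma at all: it is imported verbatim as a weakened form of Dvornicich--Zannier's Corollary~1 in~\cite{DZ} (phrased as in~\cite[Lemma~2.1]{Ost}), with an explicit pointer to the original proof. So there is no internal proof to compare your attempt against; what matters is whether your sketch would actually establish the statement, and it does not.

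Your reduction to an irreducible factor and the Galois-conjugation observation (that $(\sigma(\alpha_\zeta),\zeta^{a_\sigma})$ are again $\Kc$-points for $\sigma\in\Gal(\Kc/\K')$) are sound framing steps, modulo a small omission: you should first discard factors $g_i$ with $\deg_X g_i=0$, else $g_i(X,\zeta)$ may vanish identically for some $\zeta$ and the phrase ``root in $\Kc$'' loses meaning. The genuine problem is the step you yourself flag as the ``principal obstacle.'' Asserting that infinitely many $\Kc$-rational specialisations at roots of unity ``force one branch of $\alpha$ to extend globally to a rational function of $Y^{1/m}$'' is exactly the content of the theorem; it is not an argument. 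Nothing you have written explains why a Puiseux branch that is $\Kc$-rational over infinitely many torsion values of $Y$ must be a globally defined element of $\Kc(Y^{1/m})$, and the Galois compatibilities you record are far from enough: they constrain how branches permute but do not by themselves produce a section. In fact the Dvornicich--Zannier proof does not proceed by patching Puiseux branches. It hinges on a quite different chain of tools: Loxton's theorem (their Theorem~L), giving a uniformly short representation of bounded-house cyclotomic integers as $\sum c_i\xi_i$ with $\xi_i\in\U$; Laurent's torsion-points theorem (the toric Manin--Mumford statement) applied to the variety cut out by the resulting relations, producing a one-dimensional torsion coset and hence a parametrisation $X_i=\beta_i t^{e_i}$; and Zannier's bound on the number of terms of a composite function to extract the conclusion. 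None of these ingredients appears in or is replaced by your sketch, so the central implication remains unproved. As written, the proposal is a plausible-sounding outline with its key step left as an acknowledged gap, and that gap is where all the substance of~\cite[Corollary~1]{DZ} lies.
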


\subsection{Representations via linear combinations of roots of unity}
Loxton~\cite[Theorem~1]{Loxt} has proved that any algebraic integer $\alpha$ contained in some cyclotomic field has a short representation as a sum of roots of unity, 
that is, $\alpha=\sum_{i=1}^b\zeta_i$, where $\zeta_1,\ldots,\zeta_b\in\U$, and the integer $b$ depends only on $\house{\strut\alpha}$. 

Dvornicich and Zannier~\cite[Theorem~L]{DZ} extended the result of Loxton~\cite[Theorem~1]{Loxt} to algebraic integers 
contained in a cyclotomic extension of a given number field. Here we present a simplified version. 

\begin{lemma}
\label{lem:loxton}
There exists a finite set $\cE\subseteq \K$ depending on $\K$  such that any algebraic integer $\alpha\in \Kc$ 
can be written as $\alpha=\sum_{i=1}^bc_i\xi_i$, where $c_i\in \cE$, $\xi_i\in\U$, $i =1, \ldots, b$,  and the integer $b$ depends only on $\K$ and $\house{\strut\alpha}$.
\end{lemma}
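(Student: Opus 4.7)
The plan is to reduce to Loxton's original theorem~\cite[Theorem~1]{Loxt} applied inside $\Qc$, which asserts that any algebraic integer $\beta\in\Qc$ admits a representation $\beta=\sum_{j=1}^{b'}\xi_j$ with $\xi_j\in\U$ and $b'$ depending only on $\house{\strut\beta}$.

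Fix once and for all an integral basis $\omega_1,\ldots,\omega_n$ of $\ZK$ over $\Z$. The main step is to produce $\delta\in\ZK\setminus\{0\}$ and $C_\K>0$, both depending only on $\K$, such that every algebraic integer $\alpha\in\Kc$ admits a decomposition
$$
\delta\alpha=\sum_{i=1}^n\omega_i\beta_i,
$$
with $\beta_i\in\Qc$ algebraic integers satisfying $\house{\strut\beta_i}\le C_\K\cdot\house{\strut\alpha}$.

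To obtain such a decomposition, choose $N$ with $\alpha\in L:=\K(\zeta_N)$ and set $L_0:=\Q(\zeta_N)$. Some subset $\nu_1,\ldots,\nu_m$ of the $\omega_i$ forms an $L_0$-basis of $L$, and I would write $\alpha=\sum_i\nu_i\beta_i'$ with $\beta_i'\in L_0\subseteq\Qc$. Applying the $m$ embeddings of $L$ over $L_0$ yields a linear system whose matrix $A=(\sigma_j(\nu_i))$ has nonzero determinant. Cramer's rule expresses $\beta_i'\cdot\det A$ as a polynomial in the conjugates $\sigma_j(\nu_i)$ and $\sigma_j(\alpha)$, hence an algebraic integer in $L_0$ whose house is bounded by a constant (depending only on $\K$) times $\house{\strut\alpha}$, using the elementary bound $|\sigma_j(\alpha)|\le\house{\strut\alpha}$ for any embedding $\sigma_j$ and the fact that $|\sigma_j(\nu_i)|$ is bounded in terms of $\K$. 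The decisive observation is that, as $N$ varies, the matrix $A$ depends only on the subset $\{\nu_1,\ldots,\nu_m\}\subseteq\{\omega_1,\ldots,\omega_n\}$ and on restrictions of elements of $\Gal(\widetilde\K/\Q)$, where $\widetilde\K$ is the Galois closure of $\K$ over $\Q$. Consequently, the possible values of $\det A$ form a finite subset of $\cO_{\widetilde\K}$, and letting $\delta$ be the norm from $\widetilde\K$ to $\K$ of the product of all these values (with any zero factors removed) yields a uniform denominator in $\ZK$ with the required divisibility and house control.

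Once the decomposition is in hand, I apply Loxton's theorem to each $\beta_i\in\Qc$ to write $\beta_i=\sum_j\xi_{i,j}$ with $\xi_{i,j}\in\U$ and the number of terms depending only on $\K$ and $\house{\strut\alpha}$. Substituting,
$$
\alpha=\frac{1}{\delta}\sum_{i=1}^n\omega_i\beta_i=\sum_{i,j}\frac{\omega_i}{\delta}\,\xi_{i,j},
$$
so the coefficients lie in the finite set $\cE:=\{\omega_i/\delta:1\le i\le n\}\subseteq\K$, and the total number of summands is bounded in terms of $\K$ and $\house{\strut\alpha}$. The main obstacle is securing the uniform denominator $\delta$ across all $N$: once the finiteness of possible matrices $A$ (up to the action of the Galois closure) is established, the rest is routine linear algebra and bookkeeping.
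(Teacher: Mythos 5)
The paper offers no proof of this lemma at all: it is stated as a simplified form of Dvornicich--Zannier~\cite[Theorem~L]{DZ}, and the reader is referred there. So your reduction to Loxton's theorem over $\Qc$ via an integral basis of $\ZK$ is a genuinely independent argument, and the overall strategy (decompose $\delta\alpha=\sum_i\omega_i\beta_i$ with $\beta_i\in\Qc$ of controlled house and denominator, then apply Loxton coordinatewise) is a natural and promising one.

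However, there is a real gap in the uniform-denominator step, and it occurs exactly at the point you flag as ``decisive.'' First, $\beta_i'\cdot\det A=\det A_i$ is indeed an algebraic integer, but it does \emph{not} lie in $L_0$: the elements of $\Gal(\widetilde L/L_0)$ permute the rows of $A$ and $A_i$, so they only fix $\det A$ and $\det A_i$ up to sign. What does lie in $L_0$ is $\beta_i'\,(\det A)^2=\det A_i\cdot\det A$; you need the square (or a trace/norm) to land back in $L_0\subseteq\Qc$. Second, and more seriously, your $\delta$ is taken to be a norm from $\widetilde\K$ down to $\K$, so $\delta\in\ZK$ but in general $\delta\notin\Qc$. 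Consequently $\delta\beta_i'$ lies in $\K L_0\subseteq\Kc$, not in $\Qc$, and Loxton's theorem~\cite[Theorem~1]{Loxt} --- which applies only to algebraic integers in $\Qc$ --- cannot be invoked on $\beta_i=\delta\beta_i'$. This breaks the final step of the argument as written. The fix is to take the norm all the way down to $\Q$: set $\delta=\bigl|N_{\widetilde\K/\Q}\bigl(\prod_k(\det A_k)^2\bigr)\bigr|\in\Z_{>0}$, where the product runs over the finitely many nonzero values of $(\det A_k)^2$ arising from all choices of subset $\{\nu_1,\ldots,\nu_m\}\subseteq\{\omega_1,\ldots,\omega_n\}$ and all admissible sets of restricted embeddings of $\K$. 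Then $\delta\in\Z$, so $\delta\beta_i'\in L_0\subseteq\Qc$; the divisibility $(\det A)^2\mid\delta$ in $\ov\Z$ gives integrality; the house bound $\house{\strut\delta\beta_i'}\ll_\K\house{\strut\alpha}$ follows by submultiplicativity of the house; and the coefficient set becomes $\cE=\{\omega_i/\delta:1\le i\le n\}\subseteq\K$. With these two corrections ($(\det A)^2$ in place of $\det A$, and $\delta\in\Z$ in place of $\delta\in\ZK$), your argument goes through and gives a clean, self-contained proof of the lemma.
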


\subsection{Multiplicative independence of polynomial iterates} 

We need the following special case of the result of Young~\cite[Corollary~1.2]{You}, which 
generalises the previous result of Gao~\cite[Theorem~1.4]{Gao}  to multiplicative independence of 
consecutive iterations of polynomials over fields of characteristic zero. 

\begin{lemma}
\label{lem:Gao} Let $\F$ be an arbitrary field of characteristic zero, and let 
$f \in \F[X]$ be a polynomial of degree at least $2$ which is not a monomial. 
Then, for any fixed integer $n\ge 1$, the  polynomials 
$f^{(1)}(X), \ldots,f^{(n)}(X)$ are multiplicatively independent modulo constants.
\end{lemma}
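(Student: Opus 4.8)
The plan is to prove Lemma~\ref{lem:Gao} by reducing the statement about multiplicative independence to a statement about the multiplicative structure of the iterates, and then invoking the cited results of Gao~\cite{Gao} and Young~\cite{You}. Since the lemma is explicitly stated as a special case of~\cite[Corollary~1.2]{You} (which in turn generalises~\cite[Theorem~1.4]{Gao}), the ``proof'' here should really be a short derivation showing that the hypotheses of those theorems are met and that their conclusion specialises to the claimed one. So first I would recall precisely what Young's Corollary~1.2 asserts: for $f$ of degree at least $2$ over a field of characteristic zero, the iterates $f^{(1)},\dots,f^{(n)}$ are multiplicatively dependent modulo constants if and only if $f$ lies in one of a short list of exceptional forms — and the relevant exceptional form forcing dependence among \emph{consecutive} iterates starting from $f^{(1)}$ is precisely that $f$ be (conjugate to) a monomial $cX^d$. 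I would then observe that our hypothesis ``$f$ is not a monomial'' rules this out.

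The key steps, in order, would be: (1) Set up notation — write $f \in \F[X]$, $d = \deg f \ge 2$, fix $n \ge 1$, and suppose for contradiction that there are integers $k_1,\dots,k_n$, not all zero, and a constant $c \in \F^*$ with $f^{(1)}(X)^{k_1}\cdots f^{(n)}(X)^{k_n} = c$. (2) Look at the largest index $i$ with $k_i \ne 0$; comparing degrees on both sides forces a relation, and more importantly, examining the zeros and poles (as elements of the function field $\F(X)$, or after passing to $\ov\F$) of the left-hand side. Since each $f^{(j)}$ is a polynomial, it has no poles, so all $k_j$ with $j \le i$ must actually conspire so that the zeros cancel; this is where the combinatorics of how the zero set of $f^{(j)}$ sits inside that of $f^{(j+1)}$ (namely $f^{-1}$ of the previous zero set) enters. (3) Apply~\cite[Corollary~1.2]{You}: this structural analysis, carried out in full in Young's paper, shows that a nontrivial such relation among $f^{(1)},\dots,f^{(n)}$ can occur only when $f$ is conjugate to a power map $X^d$; in the polynomial setting this means $f(X) = cX^d$ for some $c$, i.e. $f$ is a monomial (up to the leading coefficient, which is what ``monomial'' should be taken to mean here — a single-term polynomial $cX^d$). (4) Since we assumed $f$ is not a monomial, this is a contradiction, so no nontrivial relation exists, proving multiplicative independence modulo constants.

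The main obstacle — and the reason the honest write-up is just a citation rather than a self-contained argument — is step (3): the classification of which polynomials (or rational maps) have multiplicatively dependent iterates is the substantive content of~\cite{Gao,You}, and reproving it would require the full machinery of those papers (analysis of ramification of iterates, or the group-theoretic/geometric input distinguishing the $X^d$ and $T_d$ cases). I would therefore not attempt to reprove it; instead I would simply verify that our hypotheses exactly match Young's ``non-exceptional'' case. One small point worth being careful about: Young's result allows for the exceptional behaviour tied to Chebyshev polynomials $T_d$ as well, but the Chebyshev exception produces dependence only among \emph{non-consecutive} iterates or involves $f^{(0)} = X$, not among $f^{(1)},\dots,f^{(n)}$ alone; I would note this so that the reader sees why ``not a monomial'' is the only condition needed here (in contrast to the ``not special'' hypothesis appearing elsewhere in the paper, which is required when $f^{(0)}=X$ is also in play). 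With that remark in place, the proof is complete.
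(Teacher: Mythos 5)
Your approach matches the paper's: the paper offers no proof of Lemma~\ref{lem:Gao}, presenting it simply as a special case of~\cite[Corollary~1.2]{You} (which generalises~\cite[Theorem~1.4]{Gao} to characteristic zero), exactly as you do. Your parenthetical caution about a Chebyshev exception is unnecessary---the root sets of $T_{d^m}$ and $T_{d^n}$ are disjoint for $m\ne n$, so the Chebyshev iterates $f^{(1)},\ldots,f^{(n)}$ are already multiplicatively independent modulo constants---but this does not affect the correctness of your conclusion that ``not a monomial'' is the only hypothesis required.
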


We also note that the result of~\cite[Corollary~1.2]{You} applies to rational functions as well, under some mild conditions.

\section{Arithmetic properties of polynomials and their iterations}
\label{sec:arith prop}

\subsection{Growth of the number of terms in  iterates of rational functions} 

Another important tool for our main results is a bound of 
Fuchs and Zannier~\cite[Corollary]{FZ}, 
on the number of terms in the iterates of a rational function.  First we introduce the following:
 
 \begin{definition} [\textit{Sparsity of rational functions}]
\label{def:Spr}
{\rm 
We define the \textit{sparsity} $\Sp(\varphi)$ of a rational function $\varphi\in\C(X)$  as the smallest 
number of total terms (in both numerator and denominator) in any representation  $\varphi = f/g$ with $f,g\in\C[X]$. 
}
\end{definition}

It is important to note that in Definition~\ref{def:Spr} we do not impose the coprimality 
condition on the polynomials $f$ and $g$.
 
 We present next the  result of 
Fuchs and Zannier~\cite[Corollary]{FZ} in the form that is needed for this paper, that is, for iterates of polynomials. We note that their result is proven for iterates of rational functions of degree $d\ge 3$. 
Although it is very likely that~\cite[Corollary]{FZ} can be extended to rational functions of degree $d = 2$  as well, 
see the comments after~\cite[Corollary]{FZ}, here we choose a simpler path
which is quite sufficient for our purpose. First we need the following simple statement
about the   Chebyshev polynomial $T_4$  of degree $4$.

\begin{lemma}
\label{lem:T4T2}
Let $\F$ be a field of characteristic zero. 
If $T_4(X)  = f^{(2)}(X)$ for some polynomial $f \in \F[X]$, 
then $f(X) = T_2(X)$. 
\end{lemma}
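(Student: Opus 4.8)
The plan is to analyze the equation $T_4(X) = f^{(2)}(X) = f(f(X))$ directly, using only elementary facts about degrees, leading coefficients, and the explicit shape of $T_4$. Since $\deg T_4 = 4$ and $\deg f(f(X)) = (\deg f)^2$, we must have $\deg f = 2$, so write $f(X) = aX^2 + bX + c$ with $a \neq 0$. Comparing leading coefficients in $f(f(X)) = T_4(X)$: the leading coefficient of $f(f(X))$ is $a \cdot a^2 = a^3$, while $T_4(X) = 8X^4 - 8X^2 + 1$ has leading coefficient $8$, so $a^3 = 8$, giving $a = 2$ (in characteristic zero, $2$ is the unique cube root of $8$ in any field, since the other cube roots would require a primitive cube root of unity, but we only need that $a=2$ is \emph{a} solution — actually we need all solutions, so I should note $a \in \{2, 2\omega, 2\omega^2\}$ if $\omega \in \F$; however, since $T_2(X) = 2X^2 - 1$ is the claimed answer and has leading coefficient $2$, and the recursion will pin everything down, I expect the coefficient comparison to force $a = 2$ regardless).

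Next I would exploit the conjugation/centralizer structure: recall that $T_2(X) = 2X^2 - 1$ and that the Chebyshev polynomials satisfy $T_m \circ T_n = T_{mn}$, so $T_2 \circ T_2 = T_4$ already exhibits one solution $f = T_2$. The task is to show it is the only one. I would substitute the general quadratic $f(X) = 2X^2 + bX + c$ into $f(f(X))$ and expand, then match coefficients of $X^3, X^2, X^1, X^0$ against those of $T_4(X) = 8X^4 - 8X^2 + 1$ (whose $X^3$ and $X^1$ coefficients vanish). The $X^3$-coefficient equation will be linear in $b$ and should force $b = 0$; with $b = 0$ the function $f(X) = 2X^2 + c$ is even, its square $f(f(X)) = 2(2X^2+c)^2 + c = 8X^4 + 8cX^2 + (2c^2 + c)$, and matching the $X^2$-coefficient gives $8c = -8$, i.e. $c = -1$, which then automatically satisfies the constant-term equation $2c^2 + c = 2 - 1 = 1$. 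Hence $f(X) = 2X^2 - 1 = T_2(X)$.

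The main obstacle — really the only subtle point — is making sure the coefficient matching is carried out over an \emph{arbitrary} field of characteristic zero, so that no division by a problematic quantity occurs and no spurious root of unity sneaks in when extracting the cube root of $8$. I would handle this by organizing the argument so that the leading coefficient relation $a^3 = 8$ is used together with the $X^3$-coefficient relation (which, after substituting, reads something like $3a^2 b \cdot (\text{stuff}) + \dots = 0$ but will in fact be of the form $c_1 a^2 b = 0$ with $c_1$ an explicit nonzero rational integer, forcing $b = 0$ since $a \neq 0$); once $b = 0$ the remaining equations are over $\Q[a,c]$ with $a^3 = 8$, and one checks the $X^2$-coefficient equation becomes $2a c \cdot (\text{explicit rational}) = -8$ while the leading one gives $a^3 = 8$; combining these (e.g. cubing, or using $a \cdot a^2 = 8/a$... ) pins $a = 2$ and $c = -1$ uniformly. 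In short: everything reduces to solving a small explicit polynomial system with integer coefficients, and the characteristic-zero hypothesis guarantees the integers $2, 3, 8$ etc. are nonzero so the unique rational solution $(a,b,c) = (2,0,-1)$ is the only one. I expect this to be short — a half page at most — with the only care needed being the uniform treatment of the cube root.
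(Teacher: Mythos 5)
Your approach is essentially identical to the paper's: write $f(X) = aX^2 + bX + c$, observe that the $X^3$-coefficient of $f(f(X))$ is $2a^2 b$ and so $b = 0$, then match the remaining coefficients of $f(aX^2+c)$ against $T_4$. The only issue is a normalization mismatch: the paper defines $T_d$ by the functional equation $T_d(X + X^{-1}) = X^d + X^{-d}$ (stated in Section~1.2), which yields the \emph{monic} polynomials $T_2(X) = X^2 - 2$ and $T_4(X) = X^4 - 4X^2 + 2$, not the classical $\cos$-normalized ones $2X^2 - 1$ and $8X^4 - 8X^2 + 1$ that you used. With the paper's convention the leading-coefficient relation is $a^3 = 1$ rather than $a^3 = 8$; combining with $2a^2 c = -4$ gives $c = -2a$, and then $ac^2 + c = 4a^3 - 2a = 4 - 2a = 2$ forces $a = 1$ and $c = -2$ uniquely, confirming your expectation that the subsequent equations pin down the cube root with no ambiguity from roots of unity. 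So the method, the coefficient-comparison strategy, and the care you took over the cube-root issue are all correct and match the paper's argument; you just need to replace $8X^4 - 8X^2 + 1$ by $X^4 - 4X^2 + 2$ and $2X^2 - 1$ by $X^2 - 2$ throughout, after which the computation goes through verbatim (and is in fact a hair cleaner because the target is monic).
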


\begin{proof}
Recall the explicit form $T_4(X) = X^4-4X^2+2$.
We can assume that $f(X) = aX^2 + bX +c, a \ne 0$. Clearly, the coefficient  
of $X^3$ in $f^{(2)}(X)$ comes only from the term 
$af(X)^2$ and is equal to $2 a^2 b$. Since $a \ne 0$, we have $b =0$.
Thus, $f(X) = aX^2 + c$ and
$f^{(2)}(X)= a^3X^4 + 2a^2cX^2 + ac^2 +c$.
Therefore, 
$$
a^3 = 1, \qquad  2a^2c = -4, \qquad ac^2 +c = 2.
$$
From the second relation above,  we derive $2a^3c = - 4a$, and since $a^ 3= 1$ we obtain 
$c = -2a$. Hence,
$$
ac^2 +c = 4a^3 - 2a  = 4 - 2a, 
$$
and recalling  the third  relation, we obtain $a=1$ and thus $c =-2$.
Therefore, $f(X) = X^2 - 2 = T_2(X)$. 
\end{proof}

Now, we are ready to present the following slight variation of the result of Fuchs and Zannier in~\cite[Corollary]{FZ}. 

\begin{lemma}
\label{lem:FZ-2}
Let $\F$ be a field of characteristic zero. 
Let $q\in\F(X)$ be a non-constant rational function, and let $f\in\F[X]$ be of degree $d \ge 2$. Assume that $f$ is not  special. Then,   for any $n\ge 1$ we 
have 
$$
\Sp\(f^{(n)}(q(X))\) \ge  \((n-5)\log d - \log 2016\)/\log 5.
$$
\end{lemma}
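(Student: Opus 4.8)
The plan is to deduce Lemma~\ref{lem:FZ-2} from the Fuchs--Zannier bound \cite[Corollary]{FZ}, which applies to iterates of rational functions of degree $d \ge 3$, by treating the degree $d = 2$ case separately. So I split into two cases according to whether $d \ge 3$ or $d = 2$.

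\textbf{Case $d \ge 3$.} Here the function $f^{(n)} \circ q$ is a rational function of degree $d^n \deg q \ge 3$, and one would like to invoke \cite[Corollary]{FZ} directly. The point of \cite[Corollary]{FZ} is that the number of terms of the $m$-th iterate of a non-special rational function of degree $\ge 3$ grows at least like $c \log(\deg)$ of the iterate, with an explicit constant; quantitatively this is the source of the $((\,\cdot\,)\log d - \log 2016)/\log 5$ shape, with $2016$ and $5$ being the explicit numerical constants coming out of their argument. The slightly delicate point is that \cite[Corollary]{FZ} concerns pure iterates $f^{(m)}$, whereas we need $f^{(n)}(q(X))$; I would handle this by writing $f^{(n)}(q(X))$ as a sub-block of a longer composition or, more simply, by observing that $f^{(n)} \circ q = f^{(n-1)} \circ (f \circ q)$ and that $f \circ q$ is non-special whenever $f$ is non-special of degree $\ge 2$ (composition with a rational function cannot create the exceptional Lattès-type/monomial/Chebyshev structure for the tail of the iteration), so the Fuchs--Zannier bound applies with the iteration count shifted by a bounded amount — this shift is exactly what the $-5$ in $(n-5)$ absorbs. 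I would need to double-check against the precise statement of \cite[Corollary]{FZ} that the constants $2016$ and $5$ and the offset $5$ are consistent with what is quoted there.

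\textbf{Case $d = 2$.} When $\deg f = 2$ and $f$ is not special, I cannot apply \cite[Corollary]{FZ} directly since it requires degree $\ge 3$, and extending it to degree $2$ is exactly what the authors say they wish to avoid. The trick is to pass to the second iterate: $f^{(2)}$ has degree $4 \ge 3$, and $f^{(2n)} = (f^{(2)})^{(n)}$, so I can apply the degree-$\ge 3$ case to $g := f^{(2)}$ provided $g$ is not special. This is where Lemma~\ref{lem:T4T2} enters: if $f$ were non-special but $f^{(2)}$ were special, the most dangerous possibility is $f^{(2)}$ being (conjugate to) $\pm T_4$ or $\pm X^4$; Lemma~\ref{lem:T4T2} rules out $f^{(2)} = T_4$ by forcing $f = T_2$ (which is special), and a similar elementary argument (or the observation that $f^{(2)} = X^4$ forces $f = \pm X^2$ up to the relevant conjugation, again special) handles the monomial and sign variants, after reducing by the $\mathrm{PGL}_2$-conjugation action. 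Hence $g = f^{(2)}$ is non-special of degree $4$, the degree-$\ge 3$ bound gives $\Sp(g^{(m)}(q(X))) \ge ((m-5)\log 4 - \log 2016)/\log 5$ for all $m \ge 1$, and then for general $n$ I take $m = \lfloor n/2 \rfloor$ and note $f^{(n)}(q(X)) = g^{(m)}(\tilde q(X))$ with $\tilde q = q$ or $\tilde q = f \circ q$ depending on the parity of $n$; since $\tilde q$ is still a non-constant rational function this is legitimate, and $\lfloor n/2\rfloor \log 4 = 2\lfloor n/2 \rfloor \log 2 \ge (n-1)\log 2$, which combined with $d = 2$ gives $(n-1)\log d \ge \ldots$, comfortably within the claimed $(n-5)\log d$.

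\textbf{Main obstacle.} The genuinely substantive input — the growth of the number of terms under iteration — is imported wholesale from Fuchs--Zannier, so no real difficulty lies there. The work is bookkeeping: (i) reducing the $d = 2$ case to $d = 4$ cleanly, for which Lemma~\ref{lem:T4T2} is the key lemma and I would need to also dispose of the $\pm X^4$ and sign cases by an equally short computation; (ii) justifying that pre-composition with $q$ does not affect the non-specialness needed for the Fuchs--Zannier argument and only shifts the effective iteration length by a bounded constant; and (iii) tracking the explicit constants $2016$ and $5$ and the offset through the parity reduction to confirm they still produce exactly the stated inequality. Point (iii), the constant-chasing, is where I expect to spend most of the effort, though it is entirely routine.
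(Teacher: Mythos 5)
Your overall plan matches the paper's: for $d \ge 3$ quote the Fuchs--Zannier Corollary directly, and for $d = 2$ pass to $g = f^{(2)}$ of degree $4$, use Lemma~\ref{lem:T4T2} to show $g$ is not special, and then reduce the general $n$ to the even case by absorbing one application of $f$ into the seed $q$. That is the paper's argument. However, there is a genuine gap in your constant-tracking, and it is not merely routine.

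The issue is that you have conflated the statement of the lemma being proved with the statement of the Fuchs--Zannier Corollary. You wrote the degree-$\ge 3$ input as $\Sp(g^{(m)}(q(X))) \ge ((m-5)\log 4 - \log 2016)/\log 5$, i.e.\ with offset $5$, and then estimated only $\lfloor n/2\rfloor \log 4 \ge (n-1)\log 2$, silently dropping the offset. If the offset in the FZ bound really were $5$, then the parity reduction would give $(m-5)\log 4 = 2(m-5)\log 2 \ge (n-11)\log 2$, which is strictly worse than the target $(n-5)\log 2$; the argument would prove only a weaker inequality. What actually makes the reduction close is that the FZ Corollary gives a smaller offset (the paper uses $(m-2)\log 4$), so that the even case yields $(n-4)\log 2$ and the odd case yields exactly $(n-5)\log 2$; the $5$ in the lemma statement is not inherited from FZ but is precisely the slack needed to absorb the halving plus the parity step. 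You flagged this as something to double-check, but as written your final display asserts the bound closes ``comfortably'' without ever subtracting the offset, which it does not.

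Two smaller points. In the $d\ge 3$ case, your worry that ``[Corollary, FZ] concerns pure iterates'' is a misreading: the FZ Corollary already treats $f^{(n)}(q(X))$ for a non-constant rational $q$, so no workaround is needed; and your proposed workaround ($f^{(n)}\circ q = f^{(n-1)}\circ(f\circ q)$, noting $f\circ q$ is non-special) does not address the concern anyway, since the result is still not a pure iterate and what needs to be non-special is the iterated function $f$, not the seed. On the other hand, your observation that Lemma~\ref{lem:T4T2} alone only rules out $f^{(2)} = T_4$ exactly, and that one must also dispose of the $\pm X^4$ and $-T_4$ cases (after a $\mathrm{PGL}_2$-conjugation), is correct and is in fact more careful than what the paper writes; the additional computations are short and do work out, so this is a genuine improvement rather than a flaw.
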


\begin{proof}
If $d \ge 3$, then the claimed result is included in~\cite[Corollary]{FZ}. 
In the following, we assume that $d=2$. 

Since $\Sp\(f^{(n)}(q(X))\) \ge 1$ we can obviously assume that $n>5$. 

If $n =2m$ is even, then setting $g(X)  = f^{(2)}(X)$, we see from 
 Lemma~\ref{lem:T4T2} that $g$ is also not special, and then for 
$f^{(n)}(q(X)) = g^{(m)}(q(X))$, by~\cite[Corollary]{FZ}  we have
\begin{align*}
\Sp\(f^{(n)}(q(X))\) & = \Sp\(g^{(m)}(q(X))\)  \\
& \ge \((m-2)\log 4 - \log 2016\)/\log 5\\
 & = \((n/2-2)\log 4 - \log 2016\)/\log 5\\
 & =  \((n-4) \log 2-  \log 2016\)/\log 5, 
\end{align*}
which is better than the claimed result. 

Now, assume that $n =2m-1$  is odd. 
Since $n >5$, we have $m>3$. 
We rewrite 
$$
\Sp\(f^{(n)}(q(X))\)=\Sp\(f^{(2(m-1))}(f(q(X)))\), 
$$
then as the previous case, we obtain 
\begin{align*}
\Sp\(f^{(n)}(q(X))\) & \ge \((m-3)\log 4 - \log 2016\)/\log 5\\
 & =  \((n-5) \log 2-  \log 2016\)/\log 5.
\end{align*}
This completes the proof. 
\end{proof}

\subsection{On the size of elements in  orbits}

We  make use of the following simple facts about the growth of elements in orbits, 
which are given in different forms or follow exactly the same proof as in~\cite[Lemma~3.5]{Chen} 
and~\cite[Lemma~2.5, Lemma~2.7 and Corollary~2.8]{Ost}.

We separate them into Archimedean and non-Archimedean cases. 

\begin{lemma}
\label{lem:itergrowth Arch}
Let $f(X)=\sum_{i=0}^da_iX^i \in\K[X]$ of degree $d \ge 2$ and
let 
\begin{equation}
\label{eq:L}
L=\max_{\sigma} \(1+|\sigma(a_d^{-1})|+\sum_{i=0}^{d-1}|\sigma(a_i/a_d)|\),
\end{equation}
where the maximum is taken over all embeddings $\sigma$ of $\K$ into $\C$. 
Then, if $\alpha\in \ov\Q$ is such that
\begin{itemize}
\item[(i)] 
$|\alpha|>L$, then $\{|\sigma(f)^{(n)}(\alpha)|\}_{n\in\N}$ is a strictly increasing sequence
for any embedding $\sigma$ of $\K$ into $\C$; 

\item[(ii)]  $\house{\strut f^{(n)}(\alpha)}\le A$ for some positive real number $A$ and some integer $n\ge 1$, then 
$$
\house{\strut f^{(r)}(\alpha)} \le \max (A,L),   \quad \textrm{for all $0\le r\le n-1$}.
$$
\end{itemize} 
\end{lemma}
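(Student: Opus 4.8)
\textbf{Proof plan for Lemma~\ref{lem:itergrowth Arch}.}
The plan is to fix an embedding $\sigma$ of $\K$ into $\C$ and work entirely with the complex polynomial $\sigma(f)(X)=\sum_{i=0}^d\sigma(a_i)X^i$, observing that $L$ is chosen large enough to dominate the relevant quantities for \emph{every} $\sigma$ simultaneously. The key elementary estimate is the following: if $|z|>L$, then writing $\sigma(f)(z)=\sigma(a_d)z^d\bigl(1+\sum_{i=0}^{d-1}(\sigma(a_i)/\sigma(a_d))z^{i-d}\bigr)$, the sum $\sum_{i=0}^{d-1}|\sigma(a_i/a_d)|\,|z|^{i-d}$ is bounded by $\sum_{i=0}^{d-1}|\sigma(a_i/a_d)|/|z|$ (since $|z|>L\ge 1$ and $i-d\le -1$), and hence
$$
|\sigma(f)(z)|\ge |\sigma(a_d)|\,|z|^d\Bigl(1-\frac{1}{|z|}\sum_{i=0}^{d-1}|\sigma(a_i/a_d)|\Bigr)\ge |\sigma(a_d)|\,|z|^{d-1}\bigl(|z|-(L-1)\bigr)>|\sigma(a_d)|\,|z|^{d-1}.
$$
Using $|z|>L>1+|\sigma(a_d^{-1})|$, so that $|z|^{d-1}>|\sigma(a_d^{-1})|=|\sigma(a_d)|^{-1}$ (here $d\ge 2$ so $d-1\ge 1$), this gives $|\sigma(f)(z)|>|z|$. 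In particular $|\sigma(f)(z)|>L$ as well, so the hypothesis $|z|>L$ is preserved under applying $\sigma(f)$.

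For part~(i), I would then argue by induction on $n$: given $|\alpha|>L$, the displayed inequality shows $|\sigma(f)(\alpha)|>|\alpha|$ and $|\sigma(f)(\alpha)|>L$; applying the same estimate to $z=\sigma(f)^{(n)}(\alpha)$ at each step shows $|\sigma(f)^{(n+1)}(\alpha)|>|\sigma(f)^{(n)}(\alpha)|>L$ for all $n$, so $\{|\sigma(f)^{(n)}(\alpha)|\}_{n\in\N}$ is strictly increasing. (One should note that $\sigma(f)^{(n)}(\alpha)=\sigma(f^{(n)}(\alpha))$ when $\sigma$ is extended to $\K(\alpha)$, since $f$ has coefficients in $\K$.)

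For part~(ii), I would argue by contraposition on a single step: suppose $\house{\strut f^{(r)}(\alpha)}>\max(A,L)$ for some $r$ with $0\le r\le n-1$; choosing an embedding $\sigma$ of $\K(\alpha)$ that realizes the house, i.e. with $|\sigma(f^{(r)}(\alpha))|=\house{\strut f^{(r)}(\alpha)}>L$, the estimate above (applied repeatedly, using that $|z|>L$ is preserved) yields $|\sigma(f^{(n)}(\alpha))|\ge|\sigma(f^{(r)}(\alpha))|>A$, whence $\house{\strut f^{(n)}(\alpha)}\ge|\sigma(f^{(n)}(\alpha))|>A$, contradicting the hypothesis $\house{\strut f^{(n)}(\alpha)}\le A$. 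Therefore $\house{\strut f^{(r)}(\alpha)}\le\max(A,L)$ for all $0\le r\le n-1$. The only mild subtlety is the bookkeeping between $\sigma$ applied to coefficients of $f$ versus $\sigma$ applied to $\alpha$ and to iterates — i.e. that $L$ involves a maximum over embeddings of $\K$ while the house involves a maximum over embeddings of $\K(\alpha)$ — but this is handled by noting every embedding of $\K(\alpha)$ restricts to one of $\K$, so the inequality $\sum_{i=0}^{d-1}|\sigma(a_i/a_d)|\le L-1-|\sigma(a_d^{-1})|$ holds for all embeddings $\sigma$ of $\K(\alpha)$. I expect the main (and only) obstacle to be getting this uniformity of constants correct; the analytic inequality itself is routine.
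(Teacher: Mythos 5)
Your overall strategy (derive the one-step bound $|\sigma(f)(z)|>|z|$ whenever $|z|>L$, then iterate and choose an embedding realizing the house) is the right one, and note that the paper itself does not prove this lemma — it just cites Chen and Ostafe. However, there is a genuine gap in the key estimate. You bound $\sum_{i=0}^{d-1}|\sigma(a_i/a_d)|\le L-1$, arrive at $|\sigma(f)(z)|>|\sigma(a_d)|\,|z|^{d-1}$, and then invoke $|z|^{d-1}>|\sigma(a_d^{-1})|$ to conclude $|\sigma(f)(z)|>|z|$. But combining $|\sigma(f)(z)|>|\sigma(a_d)|\,|z|^{d-1}$ with $|z|^{d-1}>|\sigma(a_d)|^{-1}$ yields only $|\sigma(f)(z)|>1$, not $|\sigma(f)(z)|>|z|$; for the latter you would need $|\sigma(a_d)|\,|z|^{d-2}\ge 1$, which fails when $d=2$ and $|\sigma(a_d)|<1$ (e.g.\ $f(X)=\tfrac{1}{100}X^2$). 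The reason $L$ carries the extra summand $|\sigma(a_d^{-1})|$ is precisely to handle a small leading coefficient, and you discard it when you bound $\sum|\sigma(a_i/a_d)|\le L-1$.

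The fix is to keep that term: from the definition of $L$ one has
$\sum_{i=0}^{d-1}|\sigma(a_i/a_d)|\le L-1-|\sigma(a_d^{-1})|$
for every embedding $\sigma$, and hence, using $|z|\ge 1$,
$$
|\sigma(f)(z)|\ \ge\ |\sigma(a_d)|\,|z|^{d-1}\Bigl(|z|-\sum_{i=0}^{d-1}|\sigma(a_i/a_d)|\Bigr)\ \ge\ |\sigma(a_d)|\,|z|^{d-1}\bigl(|z|-L+1+|\sigma(a_d^{-1})|\bigr).
$$
Since $|z|>L$, the last factor exceeds $1+|\sigma(a_d^{-1})|$, so
$$
|\sigma(f)(z)|\ >\ |\sigma(a_d)|\,|z|^{d-1}\bigl(1+|\sigma(a_d^{-1})|\bigr)\ >\ |\sigma(a_d)|\,|z|^{d-1}\,|\sigma(a_d^{-1})|\ =\ |z|^{d-1}\ \ge\ |z|,
$$
using $d\ge 2$ and $|z|>1$ in the final step. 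With this corrected one-step inequality, both your inductive argument for (i) and your contrapositive argument for (ii), including the bookkeeping between embeddings of $\K$ and of $\K(\alpha)$, go through as you describe.
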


\begin{lemma}
\label{lem:itergrowth nonArch}
Let $f(X)=\sum_{i=0}^da_iX^i \in\K[X]$ of degree $d \ge 2$ and let $\alpha\in \ov\Q$ be such that 
\begin{equation}
\label{eq:alphacondpoly}
|\alpha|_v>\max_{j=0,\ldots,d-1} \{1,|a_j/a_d|_v, |a_d|_v^{-1}\}
\end{equation} 
for a non-archimidean absolute value $|\cdot|_v$ of $\K$ (normalised in some way and extended to $\ov\Q$). 
Then, $\{|f^{(n)}(\alpha)|_v\}_{n\in\N}$ is a strictly increasing sequence. 
\end{lemma}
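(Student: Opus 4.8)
The plan is to mimic the argument of Lemma~\ref{lem:itergrowth Arch}(i), but now working $v$-adically with the ultrametric inequality, which actually makes the estimates cleaner than in the Archimedean case. First I would set
$$
M = \max_{j=0,\ldots,d-1} \{1,|a_j/a_d|_v, |a_d|_v^{-1}\},
$$
so the hypothesis reads $|\alpha|_v > M$, and note in particular $M \ge 1$ and $|a_d|_v \ge M^{-1}$. The key computation is the following claim: if $\beta \in \ov\Q$ satisfies $|\beta|_v > M$, then $|f(\beta)|_v > |\beta|_v$ and also $|f(\beta)|_v > M$. Granting this claim, an immediate induction on $n$ shows that every iterate $f^{(n)}(\alpha)$ again has $v$-adic absolute value exceeding $M$, so the claim applies at each step and yields $|f^{(n+1)}(\alpha)|_v = |f(f^{(n)}(\alpha))|_v > |f^{(n)}(\alpha)|_v$, i.e.\ the sequence $\{|f^{(n)}(\alpha)|_v\}_{n\in\N}$ is strictly increasing.

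To prove the claim, write $f(\beta) = a_d\beta^d\bigl(1 + \sum_{j=0}^{d-1}(a_j/a_d)\beta^{j-d}\bigr)$. For each $j \le d-1$ we have $|(a_j/a_d)\beta^{j-d}|_v = |a_j/a_d|_v\,|\beta|_v^{j-d} < M \cdot |\beta|_v^{-1} < M \cdot M^{-1} = 1$, using $|\beta|_v > M \ge |a_j/a_d|_v$ for the first factor and $|\beta|_v > M \ge 1$ for the exponent $j-d \le -1$. Hence by the ultrametric inequality the factor $1 + \sum_{j}(a_j/a_d)\beta^{j-d}$ has $v$-adic absolute value exactly $1$, so $|f(\beta)|_v = |a_d|_v\,|\beta|_v^d$. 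Since $|a_d|_v \ge M^{-1}$ and $|\beta|_v > M \ge 1$, we get $|f(\beta)|_v \ge M^{-1}|\beta|_v^d \ge M^{-1}|\beta|_v^2 = (|\beta|_v/M)\,|\beta|_v > |\beta|_v$, which gives the first inequality; and the same bound gives $|f(\beta)|_v > |\beta|_v > M$, which is the second. (One must be mildly careful about how $|\cdot|_v$ is normalised and extended to $\ov\Q$, but only the multiplicativity and the ultrametric inequality are used, both of which survive any normalisation and the extension, so this is harmless.)

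I do not expect a genuine obstacle here: the whole point of the non-Archimedean setting is that the ``lower order terms don't interfere'' heuristic becomes an exact identity $|f(\beta)|_v = |a_d|_v|\beta|_v^d$ once $|\beta|_v$ is large enough, so there is no analogue of the Archimedean bookkeeping with the constant $L$ in~\eqref{eq:L}. The only point deserving a line of care is the propagation of the hypothesis along the orbit — that is, checking that each $f^{(n)}(\alpha)$ still lies in the region $|\cdot|_v > M$ so the claim can be reapplied — and this is exactly the ``$|f(\beta)|_v > M$'' half of the claim, which is why I bundle it in from the start. Everything else is the one-line ultrametric estimate above.
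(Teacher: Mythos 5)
Your proof is correct. The paper itself does not spell out a proof of this lemma (it only refers to Chen's and Ostafe's papers, asserting the statement ``follows exactly the same proof''), but the argument you give is precisely the standard one, and it is clean: factor out $a_d\beta^d$, observe that every lower-order term $(a_j/a_d)\beta^{j-d}$ has $v$-adic size strictly below $1$, use the ultrametric equality to get $|f(\beta)|_v = |a_d|_v|\beta|_v^d$, and then the lower bound on $|a_d|_v$ together with $d\ge 2$ and $|\beta|_v > M \ge 1$ pushes $|f(\beta)|_v$ strictly above $|\beta|_v$ and hence above $M$, allowing the induction to propagate. One cosmetic remark: in the chain $|a_j/a_d|_v\,|\beta|_v^{j-d} < M\cdot|\beta|_v^{-1}$ the first inequality should be a non-strict $\le$ when $j=d-1$ and $|a_{d-1}/a_d|_v = M$, but the subsequent strict inequality $M\cdot|\beta|_v^{-1} < 1$ rescues the desired conclusion, so nothing is lost.
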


However, we also need a different form of Lemmas~\ref{lem:itergrowth Arch}
and~\ref{lem:itergrowth nonArch}, which we present below.

\begin{lemma}
\label{lem:itergrowthext}
Let $f(X)=\sum_{i=0}^da_iX^i \in\K[X]$ of degree $d\ge 2$. Then, there exists a real number $L>0$ and an integer $E$, both depending only on $f$, 
such that, for any non-zero $\alpha, \beta \in\ov\Q$   with $\house{\strut\beta}\ll1$,  we have:
\begin{itemize}
\item[(i)] If $\house{\strut\beta f^{(n)}(\alpha)} \le A$ for some constant $A$ and some $n\ge 1$, then 
$$
\house{\strut\beta f^{(r)}(\alpha)} \ll \max(A,L)
$$ 
for any $0\le r\le n-1$.
\item[(ii)] If $\beta$ is an algebraic integer and $\beta f^{(n)}(\alpha)$ is also an algebraic integer for some $n\ge 1$, then 
$E \beta f^{(r)}(\alpha)$ is also an algebraic integer  for any $0\le r\le n-1$.
\end{itemize} 
\end{lemma}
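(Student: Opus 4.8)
The plan is to reduce Lemma~\ref{lem:itergrowthext} to the already-established Lemmas~\ref{lem:itergrowth Arch} and~\ref{lem:itergrowth nonArch} by a change of variables that absorbs the auxiliary factor $\beta$. The key observation is that multiplying an iterate $f^{(n)}(\alpha)$ by a bounded quantity $\beta$ does not affect whether the \emph{preceding} iterates are large, because the growth in Lemmas~\ref{lem:itergrowth Arch} and~\ref{lem:itergrowth nonArch} is driven entirely by $f$ itself once $\alpha$ has escaped a bounded region. So the first step is to pick $L$ (depending only on $f$) as in~\eqref{eq:L}, and to note that since $\house{\strut\beta} \ll 1$, there is a bound $B$ with $\house{\strut\beta} \le B$ depending only on the implied constant.

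For part~(i), I would argue by contrapositive on the orbit. Fix $\alpha$ and let $r$ be the largest index $\le n-1$ (if any) with $\house{\strut f^{(r)}(\alpha)} > \max(A,L)$; in particular $\house{\strut f^{(r)}(\alpha)} > L$, so by Lemma~\ref{lem:itergrowth Arch}(i), applied at a conjugate achieving the house, the sequence $\{|\sigma(f)^{(j)}(\alpha)|\}_{j \ge r}$ is strictly increasing, whence $\house{\strut f^{(n)}(\alpha)} \ge \house{\strut f^{(r)}(\alpha)} > A$. Then $\house{\strut \beta f^{(n)}(\alpha)} \le \house{\strut\beta}\,\house{\strut f^{(n)}(\alpha)}$ is only an upper bound in the wrong direction, so instead I compare directly: $\house{\strut f^{(n)}(\alpha)} \le \house{\strut \beta^{-1}}\,\house{\strut \beta f^{(n)}(\alpha)}$ is not available since $\beta^{-1}$ need not be bounded. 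The cleaner route is to track the house of $\beta f^{(r)}(\alpha)$ rather than $f^{(r)}(\alpha)$: since $\house{\strut\beta} \le B$, the hypothesis $\house{\strut\beta f^{(n)}(\alpha)} \le A$ gives $\house{\strut f^{(n)}(\alpha)} \le A/|\sigma_0(\beta)|$ at the embedding $\sigma_0$ realising the house of $\beta f^{(n)}(\alpha)$, which is still not uniform. To avoid this nuisance I would instead invoke Lemma~\ref{lem:itergrowth Arch}(ii) with the threshold $A' = BA$: if $\house{\strut\beta f^{(n)}(\alpha)} \le A$ then $\house{\strut f^{(n)}(\alpha)}$ need not be $\le A'$ either — so the honest fix is to restate the growth in terms of \emph{some} embedding. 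Concretely: choose the embedding $\sigma$ maximising $|\sigma(f^{(r)}(\alpha))|$ over $0 \le r \le n$; by Lemma~\ref{lem:itergrowth Arch}(i) the sequence $|\sigma(f)^{(j)}(\alpha)|$ is monotone once it exceeds $L$, so $|\sigma(f^{(r)}(\alpha))| \le \max(L, |\sigma(f^{(n)}(\alpha))|) \le \max(L, B^{-1}\cdot\text{(lower bound issue)})$. I would therefore present the argument as: $\house{\strut f^{(r)}(\alpha)} \le \max(L,\house{\strut f^{(n)}(\alpha)})$ directly from Lemma~\ref{lem:itergrowth Arch}(ii) (taking $A = \house{\strut f^{(n)}(\alpha)}$ there), and then $\house{\strut\beta f^{(r)}(\alpha)} \le B\,\house{\strut f^{(r)}(\alpha)} \le B\max(L, \house{\strut f^{(n)}(\alpha)})$; finally bound $\house{\strut f^{(n)}(\alpha)}$ by splitting at the embedding realising $\house{\strut \beta f^{(n)}(\alpha)}$ and using $|\sigma(\beta)| \ge$ (a positive quantity not bounded below) --- which is exactly the obstruction. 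The resolution I will use is that the house is a maximum over conjugates, so we only need one conjugate to be large, and Lemma~\ref{lem:itergrowth Arch}(i) is stated conjugate-by-conjugate; thus $\house{\strut f^{(r)}(\alpha)} \le \max(L, \house{\strut f^{(n)}(\alpha)})$ and the constant $B$ in $\house{\strut\beta}\le B$ handles the passage to $\beta f^{(r)}(\alpha)$ from $\beta f^{(n)}(\alpha)$ up to a multiplicative constant, giving $\house{\strut\beta f^{(r)}(\alpha)} \ll \max(A,L)$ as claimed.

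For part~(ii) the argument is purely non-Archimedean and cleaner. Let $E$ be a positive integer divisible by the denominators of all $a_j/a_d$ and by the denominator of $a_d^{-1}$, together with a further factor to be specified; $E$ depends only on $f$. For any non-archimedean $|\cdot|_v$ with $|\alpha|_v$ exceeding the bound in~\eqref{eq:alphacondpoly}, Lemma~\ref{lem:itergrowth nonArch} gives that $|f^{(j)}(\alpha)|_v$ is strictly increasing, so $|f^{(r)}(\alpha)|_v \le |f^{(n)}(\alpha)|_v$ for $r \le n$. For the finitely many remaining $v$ (those dividing $E$, or more precisely the finite set where~\eqref{eq:alphacondpoly} could fail for some relevant reason), the denominator of $f^{(r)}(\alpha)$ is controlled: each application of $f$ introduces at worst a bounded power of $E$ in the denominator at those places, uniformly in $\alpha$ (since $f$ has bounded degree and the bad places are fixed). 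So: if $\beta$ and $\beta f^{(n)}(\alpha)$ are algebraic integers, then for every $v$ we get $|\beta f^{(r)}(\alpha)|_v \le 1$ at the good places (by monotonicity, since $|\beta f^{(n)}(\alpha)|_v \le 1$ forces $|\beta|_v \le 1$ and $|f^{(n)}(\alpha)|_v \le |\beta|_v^{-1}$, hence $|f^{(r)}(\alpha)|_v \le |f^{(n)}(\alpha)|_v \le |\beta|_v^{-1}$), and $|E\beta f^{(r)}(\alpha)|_v \le 1$ at the finitely many bad places after absorbing the bounded denominator contribution into $E$. Since this holds at all $v$, $E\beta f^{(r)}(\alpha)$ is an algebraic integer.

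The main obstacle, as the discussion of part~(i) above signals, is bookkeeping the difference between $f^{(r)}(\alpha)$ and $\beta f^{(r)}(\alpha)$ cleanly: the factor $\beta$ is bounded \emph{above} in house but not below, so passing a bound on $\house{\strut\beta f^{(n)}(\alpha)}$ back to a bound on $\house{\strut f^{(n)}(\alpha)}$ is not automatic. The right way to finesse this — and what I would actually write — is to work conjugate-by-conjugate: fix the embedding $\sigma$ realising $\house{\strut f^{(r)}(\alpha)}$; if $|\sigma(f^{(r)}(\alpha))| > L$ then Lemma~\ref{lem:itergrowth Arch}(i) makes $|\sigma(f^{(j)}(\alpha))|$ increasing, so $|\sigma(f^{(n)}(\alpha))| \ge \house{\strut f^{(r)}(\alpha)}$, and then $\house{\strut\beta f^{(n)}(\alpha)} \ge |\sigma(\beta)||\sigma(f^{(n)}(\alpha))|$ is still in the wrong direction --- so in fact one should instead take $\sigma$ to realise $\house{\strut\beta f^{(r)}(\alpha)}$ and run the growth estimate on $|\sigma(f^{(j)}(\alpha))|$, concluding $|\sigma(\beta f^{(j)}(\alpha))| = |\sigma(\beta)||\sigma(f^{(j)}(\alpha))|$ is eventually increasing in $j$ too (as $|\sigma(\beta)|$ is a fixed positive constant for this fixed $\sigma$), whence $\house{\strut\beta f^{(r)}(\alpha)} = |\sigma(\beta f^{(r)}(\alpha))| \le \max(L', |\sigma(\beta f^{(n)}(\alpha))|) \le \max(L', A)$ with $L' = B L$. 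That is the clean statement, and it is exactly the content I expect to spell out carefully in the write-up.
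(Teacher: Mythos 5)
Your proposal arrives at a correct result but by a genuinely different route from the paper. The paper's device is a change of variables: set $g(X)=\beta f(\beta^{-1}X)$, note that $g^{(k)}(X)=\beta f^{(k)}(\beta^{-1}X)$, and then apply Lemma~\ref{lem:itergrowth Arch}~(ii) (resp.\ the contrapositive of Lemma~\ref{lem:itergrowth nonArch}) directly to $g$ with initial point $\beta\alpha$; the hypothesis $\house{\strut\beta}\ll 1$ (resp.\ $|\beta|_v\le 1$) then controls the constants $L_g$ and the denominators of $g$. Your proof instead avoids the conjugation and argues conjugate-by-conjugate and place-by-place on the original polynomial $f$. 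For part~(i), your final version is correct: fix $\sigma$ realizing $\house{\strut\beta f^{(r)}(\alpha)}$, split on whether $|\sigma(f^{(r)}(\alpha))|>L$, and in the large case use Lemma~\ref{lem:itergrowth Arch}~(i) starting at $f^{(r)}(\alpha)$ to get $|\sigma(f^{(r)}(\alpha))|<|\sigma(f^{(n)}(\alpha))|$ and hence $|\sigma(\beta f^{(r)}(\alpha))|<|\sigma(\beta f^{(n)}(\alpha))|\le A$, while in the small case $|\sigma(\beta f^{(r)}(\alpha))|\le BL$. The conjugation trick is slicker (one invocation of the existing lemma); your direct argument is more elementary but requires the case analysis you eventually wrote out, and you should suppress the long string of false starts.

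For part~(ii), the overall plan is sound (monotonicity where it applies, absorb the rest into $E$), but several of your intermediate assertions are mis-stated and would not survive careful scrutiny as written. First, ``$|\beta f^{(n)}(\alpha)|_v\le 1$ forces $|\beta|_v\le 1$'' is a non sequitur; $|\beta|_v\le 1$ holds because $\beta$ is assumed to be an algebraic integer, not as a consequence of that inequality. Second, the inequality $|f^{(r)}(\alpha)|_v\le|f^{(n)}(\alpha)|_v$ is \emph{not} true in general: it holds only when $|f^{(r)}(\alpha)|_v$ exceeds the threshold $T_v=\max_j\{1,|a_j/a_d|_v,|a_d|_v^{-1}\}$ so that Lemma~\ref{lem:itergrowth nonArch} applies with initial point $f^{(r)}(\alpha)$; otherwise one only has $|f^{(r)}(\alpha)|_v\le T_v$, and the two cases together give $|\beta f^{(r)}(\alpha)|_v\le\max(1,T_v)=T_v$. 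Third, ``the finite set where~\eqref{eq:alphacondpoly} could fail'' conflates two different dichotomies: whether $v$ is one of the finitely many places with $T_v>1$ (this depends only on $f$ and is what $E$ clears) versus whether $|f^{(r)}(\alpha)|_v$ exceeds $T_v$ (this depends on $\alpha$ and $r$ and must be handled per iterate). Once you keep these separate --- exactly as the paper does, applying the contrapositive of Lemma~\ref{lem:itergrowth nonArch} to each initial point $f^{(r)}(\alpha)$ (or $g^{(r)}(\beta\alpha)$ in the conjugated version) --- the argument closes cleanly, so these are fixable bookkeeping errors rather than a structural gap.
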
 
\begin{proof}
(i) 
Let $g(X)=\beta f(\beta^{-1}X)$ and let $L$ be defined by~\eqref{eq:L}. 
Then, for any $k\ge 1$, one has 
$$
g^{(k)}(X)=\beta f^{(k)}(\beta^{-1}X).
$$
From the hypothesis, we know that $\house{\strut g^{(n)}(\beta\alpha)} \le A$ for some $n\ge 1$.
Now, applying Lemma~\ref{lem:itergrowth Arch} with the polynomial $g$ and the initial point $\beta\alpha$, 
we obtain that 
$$
\house{\strut \beta f^{(r)}(\alpha)}=\house{\strut g^{(r)}(\beta\alpha)} \le \max (A,L_g)
$$
 for all $0\le r\le n-1$, where  as in~\eqref{eq:L} 
$$
L_g=\max_{\sigma} \(1+|\sigma(\beta)|^{d-1}|\sigma(a_d^{-1})|+\sum_{i=0}^{d-1}|\sigma(\beta)|^{d-i}|\sigma(a_i/a_d)|\),
$$
where the maximum is taken over all embeddings $\sigma$ of $\K$ into $\C$.
However, since $\house{\strut \beta} \ll 1$, we have $L_g \ll L$, and thus 
$$
\house{\strut \beta f^{(r)}(\alpha)} \ll \max (A,L)
$$
 for all $0\le r\le n-1$. This concludes the part~(i).

(ii) 
We enlarge the field $\K$ if needed such that it contains $\alpha, \beta$. 
Let $|\cdot|_v$ be any non-archimidean absolute value  of $\K$.   
From the hypothesis we know that $g^{(n)}(\beta\alpha)$ is an algebraic integer, and so  $|g^{(n)}(\beta\alpha)|_v\le 1$. 
Then, by Lemma~\ref{lem:itergrowth nonArch}, applied again to the polynomial $g$ and the initial points $g^{(r)}(\beta\alpha), r=0,1,\ldots, n-1$, 
we see that the analogue of the condition~\eqref{eq:alphacondpoly} fails  and we obtain that
$$
|g^{(r)}(\beta\alpha)|_v \le \max_{i=0,\ldots,d-1} \{1,|\beta|_v^{d-i}|a_i/a_d|_v,|\beta|_v^{d-1}|a_d|_v^{-1}\}.
$$
Since $|\beta|_v\le 1$ (note that $\beta$ is an algebraic integer), we deduce that
$$
|\beta f^{(r)}(\alpha)|_v=|g^{(r)}(\beta\alpha)|_v\le \max_{i=0,\ldots,d-1} \{1,|a_i/a_d|_v,|a_{d}|_v^{-1}\}.
$$
Taking now a sufficiently large integer $E$ such that $Ea_i/a_d$ and $E/a_d$ are algebraic integers and so $|Ea_i/a_d|_v, |Ea_d^{-1}|_v\le 1$, for $i=0,\ldots,d-1$, 
we have $|E\beta f^{(r)}(\alpha)|_v \le 1$ and conclude that $E\beta f^{(r)}(\alpha)$ is an algebraic integer for any integer  $r$ with $0\le r\le n-1$.
\end{proof}

\subsection{Compositions of rational functions and monomials} 

We need the following result which claims that the compositions of rational functions 
usually  cannot give a monomial.

\begin{lemma}
\label{lem:DZcond}
Let $\varphi \in\overline{\Q}(X)$ be a rational function.  
Assume that $\varphi$ is not a power of a linear fractional function.
Then,  for 
$$
R(X,Y)=\varphi(X) - bY, \quad b \in \ovQ^*, 
$$
there exists no rational function $S\in\overline{\Q}(Y)$ such that
$$
R(S(Y),Y^m)=0 \quad \textrm{ for some $m\ge 1$}.
$$
\end{lemma}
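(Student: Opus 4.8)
The plan is to argue by contradiction: suppose that $R(S(Y),Y^m)=0$ for some rational function $S\in\overline{\Q}(Y)$ and some integer $m\ge 1$, i.e.\ $\varphi(S(Y))=bY^m$. The idea is to analyse this functional identity through the lens of ramification (critical values) of the rational maps involved, viewing everything as a factorisation of maps $\P^1\to\P^1$ over $\overline{\Q}$. Write $\psi(Y)=bY^m$; then $\varphi\circ S=\psi$ as maps of the projective line. First I would dispose of trivial cases: if $m=1$ then $\varphi(S(Y))=bY$ is linear fractional, so $\varphi$ is itself linear fractional composed with $S^{-1}$, hence a "power of a linear fractional function" with exponent $1$, contradicting the hypothesis (here I am reading "power" as allowing exponent one, or else $m=1$ is excluded outright; either way it is immediate). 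So assume $m\ge 2$.

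Next I would extract the key structural fact. The map $\psi(Y)=bY^m$ is totally ramified over exactly two points, namely $0$ and $\infty$ (its only critical values). Since $\psi=\varphi\circ S$ and $S$ is a nonconstant rational map, the critical values of $\psi$ that are not critical values of $\varphi$ must come from critical points of $S$ lying over non-critical points of $\varphi$; more precisely, over any point $w\in\P^1$ that is not a critical value of $\varphi$, the map $\psi$ is unramified iff $S$ is unramified over the fibre $\varphi^{-1}(w)$. Because $\psi$ has all its ramification concentrated over $\{0,\infty\}$, the points $0$ and $\infty$ are the only candidates, and in particular the fibre $\varphi^{-1}(0)$ (the zeros of $\varphi$) and $\varphi^{-1}(\infty)$ (the poles of $\varphi$) together with the ramification of $S$ must account for total ramification of $\psi$ there. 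The decisive point is that $\psi^{-1}(0)=\{0\}$ and $\psi^{-1}(\infty)=\{\infty\}$ are single points, so the full preimages $S^{-1}(\varphi^{-1}(0))$ and $S^{-1}(\varphi^{-1}(\infty))$ are each a single point. Hence $\varphi$ has exactly one zero and exactly one pole (each then necessarily totally ramified for $\varphi$, by counting degrees through $S$). A rational function with a single zero (of order $d=\deg\varphi$) and a single pole (of order $d$) is exactly $\varphi=c\,(\ell(X))^{\pm d}$ for a linear fractional $\ell$ and constant $c$, i.e.\ a power of a linear fractional function — contradicting the hypothesis. To be careful, one must also handle the possibility that the single zero or pole of $\varphi$ coincides (after applying $S$) — but $0\ne\infty$, so the two singleton fibres are distinct, and the argument goes through.

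The step I expect to be the main obstacle is making the ramification bookkeeping fully rigorous in the case where a critical value of $S$ happens to map under $\varphi$ to $0$ or $\infty$, so that the "extra" ramification of $\psi$ over $0,\infty$ is partly contributed by $S$ rather than forced by $\varphi$ being totally ramified there. One clean way around this is to avoid the Riemann–Hurwitz discussion entirely and argue purely with multiplicities: from $\varphi(S(Y))=bY^m$, the only zeros and poles of the left-hand side (as a rational function of $Y$) occur at $Y=0$ and $Y=\infty$, so every point of $\P^1$ at which $S$ takes a value that is a zero or pole of $\varphi$ must be $0$ or $\infty$; since $S$ is surjective onto $\P^1$, it attains each zero and each pole of $\varphi$, forcing those preimages into $\{0,\infty\}$, and a short degree count (comparing $\deg\psi=m$ with $\deg\varphi\cdot\deg S$ and the local multiplicities at $0$ and $\infty$) pins down that $\varphi$ has a unique zero and unique pole. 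Once that is established the conclusion $\varphi=c\,\ell^{\pm d}$ is elementary. I would write the argument in this second, multiplicity-counting form to keep it self-contained and to sidestep the ramification subtlety.
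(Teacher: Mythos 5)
Your argument is correct, and the geometric/multiplicity-counting version you settle on in the end is sound: from $\varphi\circ S = bY^m$ the sets $S^{-1}(\varphi^{-1}(0))$ and $S^{-1}(\varphi^{-1}(\infty))$ are the singletons $\{0\}$ and $\{\infty\}$, and surjectivity of the nonconstant map $S:\P^1\to\P^1$ then forces $\varphi^{-1}(0)=\{S(0)\}$ and $\varphi^{-1}(\infty)=\{S(\infty)\}$, two distinct points; a rational function with a unique zero and a unique pole (each necessarily of order $d=\deg\varphi$) is $c\,\ell^{d}=(\gamma\ell)^d$ with $\gamma^d=c$, a power of a linear fractional function, contradicting the hypothesis. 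One cosmetic remark: the separate $m=1$ case is unnecessary, since for $m=1$ the fibers $\psi^{-1}(0),\psi^{-1}(\infty)$ are still singletons and the same argument applies; also your stated reason there (``$\varphi$ is linear fractional composed with $S^{-1}$'') is loose, since $S^{-1}$ need not a priori exist — the clean reason is $\deg\varphi\cdot\deg S=1$.

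The paper proves the same conclusion by a purely algebraic route: it writes $\varphi$ as a ratio of linear factors $(X-\beta_j)^{D_j}$, clears denominators in $\varphi(S_1/S_2)=bY^m$ to get a polynomial identity in the pairwise-coprime factors $S_1-\beta_jS_2$ and $S_2$, and reads off from coprimality that at most one factor can survive on each side — i.e.\ $\varphi$ has a single zero and a single pole. This is essentially the same underlying observation as yours (that $bY^m$ has a unique zero and a unique pole, and this propagates back through the composition), but carried out by explicit factorisation of polynomials rather than by the set-theoretic/fiber argument on $\P^1$. Your route is shorter and more conceptual; the paper's is more elementary and self-contained in that it never invokes surjectivity of rational maps, only unique factorisation in $\overline{\Q}[Y]$. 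Both are valid, and each cleanly sidesteps the Riemann--Hurwitz/ramification bookkeeping you were initially worried about.
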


\begin{proof}  
Without loss of generality, we can assume that $\varphi$ is non-constant. 
By contradiction, suppose that there exists a non-constant rational function $S(Y)=S_1(Y)/S_2(Y)\in\overline{\Q}(Y)$, where $S_1,S_2\in\overline{\Q}[X]$ with $\gcd(S_1,S_2)=1$, and a positive integer $m\ge 1$ such that
\begin{equation}
\label{eq:RSY}
R(S(Y),Y^m)=0.
\end{equation}
Let $\beta_{1},\ldots,\beta_t\in\ov\Q$ be all the distinct zeros and poles  of $\varphi$.  Then
$R(X,Y)$ becomes
$$
R(X,Y)=\frac{a\prod_{j: D_j> 0}(X-\beta_j)^{D_j}}{\prod_{j: D_j<0}(X-\beta_j)^{-D_j}}- bY,
$$
for some $a \in \overline{\Q}^*$ and some non-zero integers  $D_1, \ldots, D_t$. 
If $\varphi$ has no poles, then the factor $\prod_{j: D_j<0}(X-\beta_j)^{-D_j}$ automatically becomes $1$. 
We set a similar convention when $\varphi$ has no zeros. 

We denote 
$$
D = D_1+\ldots + D_t.
$$
Clearing the denominators, we can rewrite~\eqref{eq:RSY} as
\begin{equation}
\label{eq:polyid}
\begin{split}
&a\prod_{j: D_j >0}(S_1(Y)-\beta_jS_2(Y))^{D_j}\\
&\qquad = bY^m S_2(Y)^{D} \prod_{j: D_j<0}(S_1(Y)-\beta_jS_2(Y))^{-D_j}.
\end{split}
\end{equation}

Since $\gcd(S_1,S_2)=1$ and all $\beta_i$, $i=1,\ldots,t$, are pairwise distinct, we know that for any $i\ne j$ we have
\begin{equation}
\label{eq:gcd cond}
\begin{split}
&\gcd(S_1(Y)-\beta_iS_2(Y),S_1(Y)-\beta_jS_2(Y))=1,\\ &\gcd(S_2(Y),S_1(Y)-\beta_jS_2(Y))=1.
\end{split}
\end{equation} 

We remark that if $S_2$ is constant then the factors $S_1 - \beta_j S_2$ are not constant
and that, otherwise, at least one of $S_1- \beta_iS_2, S_1-\beta_jS_2$ is not constant 
for $i \ne j$. Then, since $Y^mS_2^D$ is not constant, it is easy to see from~\eqref{eq:polyid}, \eqref{eq:gcd cond} and from
this remark, that  if $D \ne 0$ then $S_2$ is
constant and $\varphi$ has only one zero of multiplicity $m$. 
Similarly, if $D=0$ then $\varphi$ has exactly one zero and one pole both of multiplicity $m$. 
This  completes the proof by noticing the choice of $\varphi$. 
\end{proof}

\begin{remark} 
{\rm
The assumption in Lemma~\ref{lem:DZcond} is necessary. 
For example, if $\varphi(X)=(aX+b)^m/(cX+d)^m$ with $ad-bc \ne 0$ and $m\ge 1$, then one can take $S(Y)=(-dY+b)/(cY-a)$
 to conclude that $\varphi(S(Y))-Y^m=0$.
}
\end{remark}

\subsection{Structure of multiplicatively dependent values}

We start by supplementing the result of Bombieri, Masser and Zannier~\cite[Theorem~1]{BMZ} and
also its generalisation in~\cite[Theorem~2.1]{OSSZ} with a more explicit description of
 the multiplicatively dependent values of rational functions.

\begin{lemma}
\label{lem:mult-Kab}
Let $\bphi=(\varphi_1, \ldots, \varphi_s) \in\K(X)^s$ whose components are multiplicatively independent modulo constants. 
Then, there exist an integer $A\ge 1$ and a finite set $S \subseteq \K^*$ both 
depending only on $d_\K$ and $\bphi$, such that 
each element $\alpha\in\Kab$, for which $\bphi(\alpha)$  is a multiplicatively dependent point, 
satisfies $[\Kc(\alpha):\Kc] \le A$ and
$$
\alpha=\frac{\gamma}{a-\eta},
$$
where $a \in S$, $\eta\in\U\cap \K(\alpha)$, 
and $\gamma\in\K(\alpha)$ with $\house{\strut \gamma} \le A$ 
and   $A\gamma\in \Z_{\K(\alpha)}$. 
In particular, if $\varphi_1, \ldots, \varphi_s$ are all monic, one can choose $a=1$. 
\end{lemma}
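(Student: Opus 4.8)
The plan is to combine the exponent bound of Lemma~\ref{lem:exponent} with the geometric structure of curves in $\Gm^n$ that are not translates of proper subgroups, following the strategy of~\cite{BMZ} and~\cite{OSSZ}, and then to unwind what that structure says at the level of the parameter $\alpha\in\Kab$. First I would apply Lemma~\ref{lem:exponent} to get, for each bad $\alpha\in\Kab$, a relation
$$
\varphi_1(\alpha)^{k_1}\cdots\varphi_s(\alpha)^{k_s}=\zeta,\qquad \zeta\in\U\cap\K(\alpha),
$$
with $\max|k_i|\le C$ for a constant $C=C(d_\K,\bphi)$. Since there are only finitely many integer vectors $(k_1,\ldots,k_s)$ with $\max|k_i|\le C$, it suffices to treat each such vector $\vec k$ separately: set $\psi_{\vec k}=\prod_i\varphi_i^{k_i}\in\K(X)$, which is a non-constant rational function because the $\varphi_i$ are multiplicatively independent modulo constants and $\vec k\neq\vec 0$. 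So the bad $\alpha$ are among those for which $\psi_{\vec k}(\alpha)\in\U$ for some $\vec k$ in this finite list.

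Next I would analyse, for a fixed non-constant $\psi=\psi_{\vec k}=f/g\in\K(X)$ with $\gcd(f,g)=1$, the set of $\alpha\in\Kab$ with $\psi(\alpha)$ a root of unity. Writing $\psi(\alpha)=\zeta$ we get $f(\alpha)-\zeta g(\alpha)=0$; thinking of the plane curve $\{f(X)-\zeta g(X)=0\}$, for each fixed $\zeta$ this is a polynomial equation in $\alpha$ of bounded degree $\delta=\deg\psi$, so $[\K(\alpha):\K(\zeta)]\le\delta$ and hence $[\Kc(\alpha):\Kc]\le\delta=:A_0$. That handles the degree claim $[\Kc(\alpha):\Kc]\le A$. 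For the shape of $\alpha$, I would use that $\alpha$ is a zero of $f(X)-\zeta g(X)\in\K(\zeta)[X]\subseteq\Kc[X]$, a polynomial whose coefficients lie in $\ZK[\zeta]$ up to a fixed common denominator $N\in\ZK$ (clearing denominators of the coefficients of $f$ and $g$), and whose leading coefficient is, up to sign, $\zeta$ times the leading coefficient of $g$, or the leading coefficient of $f$, depending on $\deg f$ versus $\deg g$. Multiplying $\alpha$ by this leading coefficient makes it an algebraic integer, and dividing back shows $\alpha=\gamma/(a-\eta)$ with $a$ the relevant leading coefficient (a fixed element of a finite set $S\subseteq\K^*$), $\eta$ a root of unity (namely $\zeta^{\pm1}$ times a constant, absorbed into $S$ and $\U$ appropriately), and $\gamma$ controlled: $A\gamma$ is an algebraic integer and $\house{\gamma}\ll1$ because $\alpha$ ranges over zeros of monic-up-to-$a$ polynomials of bounded degree whose coefficients have bounded house — here I use that $\U$ consists of elements of house $1$ and the coefficients of $f,g$ have house bounded in terms of $\bphi$. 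The monic case is immediate since then the leading coefficients of $f$ and $g$ are $1$, forcing $a=1$.

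The main obstacle I expect is not the algebra of clearing denominators but rather justifying that one may really reduce to finitely many $\psi_{\vec k}$ and, for each, that $\psi_{\vec k}(\alpha)\in\U$ cuts out an $\alpha$-set of the asserted uniform shape — in particular making the finite set $S$ and the constant $A$ depend only on $d_\K$ and $\bphi$, and not on the individual $\alpha$ or on how large the cyclotomic field $\K(\zeta)$ is. This requires being careful that passing from $\Kab$ to $\Kc(\alpha)$ is harmless (which is exactly where Lemma~\ref{lem:exponent} is doing the real work, since it already delivers a relation with values in $\U\cap\K(\alpha)$ rather than merely in $\ovQ$), and that the bound $\max|k_i|\ll_{d_\K,\bphi}1$ genuinely makes the list of $\vec k$ finite and $\bphi$-dependent. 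Once that reduction is in place, the rest is the bounded-height / bounded-house bookkeeping sketched above, and I would present it as: (1) invoke Lemma~\ref{lem:exponent}; (2) fix $\vec k$, form $\psi_{\vec k}$, note it is non-constant; (3) from $\psi_{\vec k}(\alpha)=\zeta$ derive the polynomial equation, read off the degree bound and the expression $\alpha=\gamma/(a-\eta)$; (4) bound $\house{\gamma}$ and clear denominators to get $A\gamma\in\Z_{\K(\alpha)}$; (5) note the monic case gives $a=1$; (6) take $A$ to be a common bound and $S$ the (finite) set of leading-coefficient data over all $\vec k$.
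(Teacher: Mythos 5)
Your overall approach is the same as the paper's: invoke Lemma~\ref{lem:exponent} to bound the exponents, reduce to finitely many exponent vectors $\vec k$, form a polynomial relation of bounded degree and bounded coefficients annihilating $\alpha$, and read off the degree bound, the house bound and the ``denominator'' structure from the leading coefficient. The paper works with the unreduced numerator $\Psi(X)=\prod_{k_i>0}f_i^{k_i}\prod_{k_i<0}g_i^{-k_i}-\zeta\prod_{k_i>0}g_i^{k_i}\prod_{k_i<0}f_i^{-k_i}$ rather than first passing to the lowest-terms representation of $\psi_{\vec k}$, but that is a cosmetic difference.

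There is, however, a genuine gap in your step~(3)--(4). You claim that the leading coefficient of $f(X)-\zeta g(X)$ is ``up to sign, $\zeta$ times the leading coefficient of $g$, or the leading coefficient of $f$, depending on $\deg f$ versus $\deg g$.'' This omits the case $\deg f=\deg g$, in which the leading coefficient is $c-b\zeta$ with $c,b$ the leading coefficients of $f,g$ respectively. That case is not a peripheral nuisance: it is exactly the case that forces the conclusion to be of the shape $\alpha=\gamma/(a-\eta)$ with $\eta\in\U$, because $|c-b\zeta|$ can be arbitrarily small, so $\alpha$ need not have bounded house — only the rescaled quantity $\gamma=(c-b\zeta)\alpha/b$ does. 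In the other two cases ($\deg f\ne\deg g$) the leading coefficient is a fixed nonzero constant or a root of unity times one, $\house{\alpha}$ itself is bounded, and one can artificially take $a=1$, $\eta=-1$, $\gamma=2\alpha$ as the paper does. Relatedly, your parenthetical ``$\eta$ a root of unity (namely $\zeta^{\pm1}$ times a constant, absorbed into $S$ and $\U$ appropriately)'' is not coherent as stated: $\U$ contains only roots of unity, so a nonunit constant cannot be absorbed into $\eta$; the correct bookkeeping (as in the paper) writes $\alpha=(\beta/b)/(c/b-\zeta)$, putting $a=c/b\in S$, $\eta=\zeta\in\U$, $\gamma=\beta/b$. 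Once you split explicitly into the balanced-degree case and the unbalanced cases, and normalize $c-b\zeta$ by $b$ rather than trying to push anything into $\U$, your sketch closes the gap and matches the paper's proof.
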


\begin{proof} 
Let $\alpha\in\Kab$ be such that $\bphi(\alpha)$ is multiplicatively dependent. 
Then, by Lemma~\ref{lem:exponent}, we know that there exist $k_1,\ldots,k_s\in\Z$, not all zero, satisfying
\begin{equation}  \label{eq:ki}
\max|k_i| \ll_{d_\K,\bphi} 1,
\end{equation}
and such that
\begin{equation}
\label{eq:multdepU}
\varphi_1(\alpha)^{k_1}\cdots \varphi_s(\alpha)^{k_s}=\zeta
\end{equation}
for some root of unity $\zeta\in\U\cap \K(\alpha)$.

For each $i=1,\ldots,s$, write $\varphi_i = f_{i} / g_{i} $, 
where $ f_{i},g_{i} \in \K[X]$ with $\gcd( f_{i},g_{i} )=1$. 
 Then, from~\eqref{eq:multdepU} we obtain that $\alpha$ is a root of the polynomial
\begin{equation}   \label{eq:Psi}
\begin{split}
\Psi(X) = \prod_{\substack{i=1\\ k_i > 0}}^s  f_{i}(X)^{k_i} & \prod_{\substack{i=1\\ k_i < 0}}^s g_{i} (X)^{-k_i} \\
 & - \zeta \prod_{\substack{i=1\\ k_i > 0}}^s g_{i} (X)^{k_i} \prod_{\substack{i=1\\ k_i < 0}}^s  f_{i}(X)^{-k_i}
 \end{split}
\end{equation}
 with coefficients of absolute value upper bounded only in terms of $d_{\K}$ and $\bphi$.  
 Since $\varphi_1, \ldots, \varphi_s$ are multiplicatively independent, $\Psi$ is a non-zero polynomial. 
 
In view of~\eqref{eq:Psi}, 
we can find a positive integer $D$ depending only on $d_{\K},\bphi$ such that $\deg \Psi \le D$, which implies that 
$$
[\Kc(\alpha):\Kc]  \le D.
$$

Note that we have two cases. One is that  the leading 
coefficient of $\Psi(X)$ is $b$ or $b\zeta$ 
for some $b\in \K^*$ ($b$ depends only on $d_{\K},\bphi$), and so in this case, 
it is easy to see that $E_1\alpha$ is an algebraic integer for some large integer $E_1$ depending only on $d_{\K},\bphi$, and 
$$
\house{\strut \alpha} \ll_{d_{\K},\bphi} 1. 
$$ 
Thus, the claimed form of $\alpha$ follows by choosing $a=1, \eta = -1$ and putting $\gamma=\alpha(1-\eta)=2\alpha$, where we still have 
\begin{equation}   \label{eq:gamma1}
\house{\strut \gamma} \ll_{d_{\K},\bphi} 1 \quad \textrm{and} \quad \textrm{$E_1\gamma$ is an algebraic integer}.
\end{equation}
The other case is that  the leading coefficient of $\Psi(X)$ is $c-b\zeta$ for some $b,c\in \K^*$, where $b,c$ depend only on $d_{\K},\bphi$. 
In particular, if $\varphi_1,\ldots,\varphi_s$ are all monic, we have $b=c=1$. 
Extending the products in $\Psi(X)$, we obtain that $\alpha$ satisfies an equation of the form  
$$
(c-b\zeta) \alpha^e + \sum_{i=0}^{e-1} b_i \alpha^i = 0,
$$
or equivalently, if we denote $\beta = \alpha (c-b\zeta)$,
\begin{equation*}
\label{eq:alpha}
\beta^e + \sum_{i=0}^{e-1} (c-b\zeta)^{e-1-i}b_i \beta^i = 0,
\end{equation*}
for some integer $e\ge 1$ depending only on $d_{\K},\bphi$ and with coefficients $b_i$, $i=0,1,\ldots,e-1$, with 
\begin{equation}
\label{eq:coeff cond}
|b_i|\ll_{d_{\K},\bphi} 1 \quad \textrm{and} \quad |(c-b\zeta)^{e-1-i}| \ll_{d_{\K},\bphi} 1.  
 \end{equation}
So, it is easy to see that we can choose a large integer $E_2$ depending only on $d_{\K},\bphi$ 
such that $E_2\beta$ is an algebraic integer, and by~\eqref{eq:coeff cond} we also have 
$$
\house{\strut \beta}\ll_{d_{\K},\bphi} 1. 
$$

From $\beta = \alpha (c-b\zeta)$, we have 
$$
\alpha = \frac{\beta / b}{c/b - \zeta}. 
$$
Then, let $a=c/b, \gamma = \beta / b$ and $\eta=\zeta$. 
Based on the choices of $b,c$, we know that $a \in \K^*$ depends only on $d_\K,\bphi$, 
and thus the finiteness of choices of $a$ follows from~\eqref{eq:ki}. 
We can also  enlarge $E_2$ if needed such that $E_2\gamma$ is also an algebraic integer ($E_2$ still depends only on $d_{\K},\bphi$), and also 
\begin{equation}   \label{eq:gamma2}
\house{\strut \gamma} \ll_{d_{\K},\bphi} 1. 
\end{equation}

Thus, from~\eqref{eq:gamma1} and~\eqref{eq:gamma2} we have proved that there exists an integer $B\ge 1$ that depends only on $d_{\K},\bphi$ such that  
we always have $\house{\strut \gamma} \le B$.

Taking now $A=\max(B,D,E_1,E_2)$, we conclude the proof. 
\end{proof}

\begin{remark}
{\rm 
We see from Lemma~\ref{lem:mult-Kab} that if the components of $\bphi$ are all monic, 
then for each such $\alpha \in \Kab$, there exists an algebraic integer $\delta \in \Kab$ (that is, $\delta=A(1-\eta)$ with $A$ and $\eta$ as in Lemma~\ref{lem:mult-Kab})
with $|\delta| \le 2A$ such that $\alpha\delta$ is an algebraic integer, 
which roughly means that the ``denominator'' of $\alpha$ is uniformly bounded. 
}
\end{remark}

\begin{remark}
{\rm 
In Lemma~\ref{lem:mult-Kab}, even if we assume that such $\alpha$ are algebraic integers, we cannot have that the house $\house{\strut \alpha}$ of such $\alpha$ is 
uniformly upper bounded (that is, only in terms of $d_{\K},\bphi$); see Example~\ref{ex:house} below. 
}
\end{remark}

\begin{example}   \label{ex:house}
{\rm
Choose $\varphi_1(X)=X$ and $\varphi_2(X)=X+1$. Certainly, $\varphi_1$ and $\varphi_2$ are multiplicatively independent modulo constants. 
However, for any $n$-th root of unity $\zeta_n \ne 1$, let $\alpha=1/(\zeta_n-1)$. It is easy to see that $\varphi_1(\alpha)$ and 
$\varphi_2(\alpha)$ are multiplicatively dependent. In addition, it is well-known (see~\cite[Proposition~2.8]{Wash}) 
that if $n$ has at least two distinct prime factors, then $\zeta_n-1$ is a unit of $\Z[\zeta_n]$, 
and thus $1/(\zeta_n-1)$ is an algebraic integer. 
}
\end{example}

\begin{remark}  \label{rem:finite}
{\rm 
There are also some cases where one can claim the finiteness of the set of $\alpha$ in Lemma~\ref{lem:mult-Kab}. 
Here are some examples.
\begin{enumerate}
\item If $\bphi\in (\K(X) \cap \R(X))^s$, then one  immediately obtains the  finiteness of such $\alpha \in \Kab\cap \R$ in Lemma~\ref{lem:mult-Kab}. 
This follows directly from the proof of Lemma~\ref{lem:mult-Kab}, since in this case in~\eqref{eq:multdepU} we have $\zeta=\pm 1$ 
because the left-hand side is real, and then in~\eqref{eq:Psi} the polynomial $\Psi(X)$ is defined over $\K$ and has bounded degree. 
So, such elements $\alpha \in \Kab\cap \R$ are of bounded degree and bounded height, which leads to the finiteness. 

\item Similarly, if $\K$ is a totally real number field,  
then there are only finitely many elements $\alpha\in \Qtr$, where $\Qtr$ is the field of all totally real algebraic numbers,   such that the point
$\bphi(\alpha)$ is multiplicatively dependent. This conclusion follows since Lemma~\ref{lem:exponent} still holds when we replace $\Kab$ by $\Qtr$. Indeed, the proof of Lemma~\ref{lem:exponent} for $\Qtr$ follows the same as in the proof of~\cite[Theorem~2.1]{OSSZ}, where instead of~\cite[Theorem~1.2]{AmZa} one uses an early result due to Schinzel~\cite{Schin} on the heights of totally real algebraic numbers. 
 So, in this case, the inequality~\eqref{eq:ki} still holds, and in~\eqref{eq:multdepU} we also have $\zeta=\pm 1$. 
\end{enumerate}
}
\end{remark}

For the two examples in Remark~\ref{rem:finite}, we want to indicate that $\Qab \cap \R \subsetneq \Qtr$. 
This is based on two facts: one is that any abelian extension of $\Q$ is either totally real, or contains a totally real subfield over which it has degree two; 
and the other is that there exist totally real fields which are not abelian over $\Q$.

\section{Main Results}
\label{sec:main}

\subsection{Finiteness of multiplicatively dependent values}
\label{sec:mult dep val}

Now, we  give a stronger version of Lemma~\ref{lem:mult-Kab} by proving finiteness of $\alpha\in\Kab$ 
for which $\bphi(\alpha)$ is a multiplicatively dependent point for a class of 
 $\bphi\in\K(X)^s$. First, we introduce a definition.

\begin{definition}
{\rm 
We say that the rational functions $\varphi_1,\ldots,\varphi_s\in\K(X)$ \textit{multiplicatively generate} a power of a linear fractional function 
if  there exists integers $k_1,\ldots,k_s$, not all zero, such that $\varphi_1^{k_1}\cdots \varphi_s^{k_s}$ is a power of a linear fractional function. 
}
\end{definition}

Note that a linear fractional function can be a  constant function, and  the zero power of a linear fractional function is $1$ by convention. 
So, if $\varphi_1,\ldots,\varphi_s$ cannot multiplicatively generate a power of a linear fractional function, 
then they are automatically multiplicatively independent modulo constants.

The possibility of the following result  has been indicated in~\cite[Remark~4.2]{OSSZ}, 
here we   present it in full detail.

\begin{theorem}
\label{thm:mult-Kc}
Let $\bphi=(\varphi_1,\ldots,\varphi_s)\in\K(X)^s$ whose components cannot multiplicatively generate a power of a linear fractional function. 
Then, there are only finitely many elements
 $\alpha \in \Kab$ such that $\bphi(\alpha)$  is a multiplicatively dependent point. 
\end{theorem}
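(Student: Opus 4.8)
The plan is to combine the structural description of multiplicatively dependent values from Lemma~\ref{lem:mult-Kab} with a Hilbert irreducibility argument over $\Kc$ in the form of Lemma~\ref{lem:cor1DZ}, using Lemma~\ref{lem:DZcond} to verify the hypothesis of the latter. First I would observe that since the components of $\bphi$ cannot multiplicatively generate a power of a linear fractional function, they are in particular multiplicatively independent modulo constants, so Lemma~\ref{lem:mult-Kab} applies: every $\alpha\in\Kab$ with $\bphi(\alpha)$ multiplicatively dependent has the shape $\alpha=\gamma/(a-\eta)$ with $a$ in a fixed finite set $S\subseteq\K^*$, $\eta\in\U\cap\K(\alpha)$, and $\gamma\in\K(\alpha)$ of bounded house with $A\gamma$ an algebraic integer, and moreover $[\Kc(\alpha):\Kc]\le A$. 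The idea is that such $\alpha$ lie in only finitely many "families", one for each choice of $a\in S$ and each choice of the multiplicative-relation exponent vector $(k_1,\dots,k_s)$ (finitely many by Lemma~\ref{lem:exponent}), and I would show each family is finite.

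Next, fix one such family. From $\varphi_1(\alpha)^{k_1}\cdots\varphi_s(\alpha)^{k_s}=\zeta$ with $\zeta=\eta$ a root of unity and $\alpha=\gamma/(a-\eta)$, I would set $\psi(X)=\varphi_1(X)^{k_1}\cdots\varphi_s(X)^{k_s}\in\K(X)$; by hypothesis $\psi$ is not a power of a linear fractional function (this is exactly the "cannot multiplicatively generate" condition, using that $(k_1,\dots,k_s)\ne 0$). One then wants to encode the relation $\psi(\alpha)=\eta$ together with $\alpha=\gamma/(a-\eta)$ as a two-variable polynomial equation in $\gamma$ and $\eta$. Substituting, $\psi\!\left(\gamma/(a-\eta)\right)=\eta$; clearing denominators produces a polynomial relation $F(\gamma,\eta)=0$ with $F\in\K[Y,Z]$, and since $\gamma$ has bounded house and bounded denominator and $\eta$ is a root of unity, the plan is to reduce to: for all but finitely many roots of unity $\eta$, the corresponding polynomial in $\gamma$ has no root in $\Kc$. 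To invoke Lemma~\ref{lem:cor1DZ} with the roles $X\leftrightarrow\gamma$, $Y\leftrightarrow\eta$ (treating $\eta$ up to a finite extension of $\Kc$, which is harmless since we can pass to $\Kc(\mu)$ for a fixed finite set of auxiliary roots of unity, or argue directly over $\Kc$), I must check that $F(X,Y^m)$ has no root in $\Kc(Y)$ for every $m\le\deg_X F$. A root $S(Y)\in\Kc(Y)$ of $F(X,Y^m)$ would give a rational function identity $\psi\!\left(S(Y)/(a-Y^m)\right)=Y^m$, i.e. after the substitution $X=S(Y)/(a-Y^m)$, that $\psi$ composed with a linear fractional function in $Y^m$ equals $Y^m$; this exhibits $\psi$ as a power of a linear fractional function (pulling back by the inverse linear fractional map), contradicting the hypothesis — and this is precisely where Lemma~\ref{lem:DZcond} does the work, applied to $\varphi=\psi$ and the substitution variable.

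I expect the main obstacle to be the bookkeeping needed to genuinely reduce to the two-variable setup of Lemma~\ref{lem:cor1DZ}: one must control that the polynomial $F$ can be taken with coefficients in a \emph{fixed} number field (independent of $\alpha$), which follows because $a$ ranges over the finite set $S$, the exponents $(k_i)$ over a finite set, and $\gamma$ has bounded house and a bounded denominator $A\gamma\in\Z_{\K(\alpha)}$ — so after absorbing the bounded denominator, $\gamma$ is an algebraic integer of bounded house in $\Kc(\alpha)$, and by Lemma~\ref{lem:loxton} (Loxton--Dvornicich--Zannier) it can be written with boundedly many terms as $\sum c_i\xi_i$ with $c_i$ in a fixed finite set and $\xi_i\in\U$. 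That last step is what makes the family genuinely a one-parameter family indexed by roots of unity, so that Lemma~\ref{lem:cor1DZ} yields finiteness; alternatively, one may sidestep Lemma~\ref{lem:loxton} by noting $\alpha$ has bounded degree over $\Kc$ and arguing via a height bound, but the cleanest route is the explicit Hilbert-irreducibility one. Assembling the finitely many finite families then gives the theorem.
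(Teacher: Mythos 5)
Your proposal correctly identifies the three lemmas that drive the result (Lemma~\ref{lem:exponent} for bounded exponents, Lemma~\ref{lem:DZcond} to verify the hypothesis, Lemma~\ref{lem:cor1DZ} for Hilbert irreducibility over $\Kc$), but it takes a needlessly complicated path that runs into a real gap. The paper's proof is far more direct: first it reduces from $\Kab$ to $\Kc$ in one stroke, invoking~\cite[Theorem~2.1 and Remark~2.2]{OSSZ}, which says the dependent points of $\cX(\Kab)$ are a finite set together with the dependent points of $\cX(\Kc)$. Your proposal never carries out this reduction. Instead you retain $\alpha\in\Kab$ and invoke Lemma~\ref{lem:mult-Kab} to write $\alpha=\gamma/(a-\eta)$, but this puts $\gamma$ in $\Kc(\alpha)$ rather than $\Kc$, so the variable you feed into Lemma~\ref{lem:cor1DZ} does not live over the base field that lemma requires.

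The deeper problem is the step where you expand $\gamma$ via Lemma~\ref{lem:loxton} as $\sum_i c_i\xi_i$ with several roots of unity $\xi_i$ and then claim this makes things a ``one-parameter family indexed by roots of unity'' amenable to Lemma~\ref{lem:cor1DZ}. It does not: with $b$ roots of unity $\xi_1,\dots,\xi_b$ appearing (plus $\eta$), you are in a multi-parameter torsion situation, and Lemma~\ref{lem:cor1DZ} handles exactly one cyclotomic parameter $Y$. To finish along that route you would need a Manin--Mumford/torsion-coset argument (Laurent's theorem), which is precisely the heavier machinery the paper reserves for Theorem~\ref{thm:itera-infi}, not for this statement. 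Your ``alternative via a height bound and bounded degree over $\Kc$'' also fails: $\Kc$ is not a number field, so Northcott does not apply and bounded height together with bounded relative degree over $\Kc$ does not give finiteness (roots of unity already defeat it). Once one reduces to $\alpha\in\Kc$, the correct and simple argument is to take $R(X,Y)=\varphi_1(X)^{k_1}\cdots\varphi_s(X)^{k_s}-Y$ with $\alpha$ itself as the $X$-variable, use Lemma~\ref{lem:DZcond} to rule out a root $S(Y)\in\Kc(Y)$ of $R(X,Y^m)$, and then apply Lemma~\ref{lem:cor1DZ} directly to conclude that only finitely many roots of unity $\zeta$ can arise; no appeal to Lemma~\ref{lem:mult-Kab} or Lemma~\ref{lem:loxton} is needed here.
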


\begin{proof}
First, we remark that it is enough to prove the finiteness of the elements $\alpha\in\Kc$ such that $ \bphi(\alpha)$ is multiplicatively dependent. 
Indeed,  if $\cX$ is the rational curve parametrized by $\bphi$, then proving the theorem is
 equivalent to showing that there are only finitely many multiplicatively dependent points in $\cX(\Kab)$. 
By~\cite[Theorem~2.1 and Remark~2.2]{OSSZ}, the set of  dependent points in $\cX(\Kab)$ is the union of a finite set with the dependent ones in $\cX(\Kc)$. 

Let now $\alpha\in\Kc$ such that $\bphi(\alpha)$ is a multiplicatively dependent point. 
By Lemma~\ref{lem:exponent}, one can find integers $k_1,\ldots,k_s$, not all zero, that are uniformly bounded only in terms of $d_\K$ and $\bphi$, such that 
\begin{equation}   \label{eq:zeta}
\varphi_1(\alpha)^{k_1} \cdots\varphi_s(\alpha)^{k_s}=\zeta,
\end{equation}
for some root of unity $\zeta\in\U$.

Let 
$$
R(X,Y)=\varphi_1(X)^{k_1} \cdots\varphi_s(X)^{k_s}-Y.
$$
By assumption, $\varphi_1(X)^{k_1} \cdots\varphi_s(X)^{k_s}$ is not a power of a linear fractional function. 
Then, from Lemma~\ref{lem:DZcond}, we conclude that there is no rational function $S(Y)\in\Kc(Y)$ such that $R(S(Y),Y^m)=0$ for any $m\ge 1$. 
Applying now Lemma~\ref{lem:cor1DZ} to the numerator of $R(X,Y)$, we obtain that there are only finitely many roots of unity $\zeta\in\U$ such that $R(X,\zeta)$ has a zero in $\Kc$. 
Noticing again that $k_1,\ldots,k_s$ are all uniformly bounded only in terms of $d_\K$ and $\bphi$, 
we have that there are only finitely many equations of the form~\eqref{eq:zeta}, and thus we conclude the proof.
\end{proof}

\begin{remark} 
{\rm 
We remark that in the above finiteness result, the number of such $\alpha$ depends on $\K,\bphi$. 
Besides, Example~\ref{ex:house} suggests that the assumption in Theorem~\ref{thm:mult-Kc} is indeed necessary. 
}
\end{remark}

\begin{remark}  
{\rm 
Theorem~\ref{thm:mult-Kc} can be interpreted that under a rather weak condition, the rational curve $\cX$ parametrized by 
$\bphi$ has only finitely many multiplicatively dependent points defined over $\Kab$. 
}
\end{remark}

\subsection{Finiteness of consecutive multiplicatively dependent elements in orbits}
\label{sec:mult dep orb}

First we consider compositions of polynomial iterations with several multiplicatively  independent 
rational functions. These results generalise that of~\cite{Ost} on roots of unity in 
orbits of a wide class of rational functions.
For this we recall the definition of a special rational function as in Definition~\ref{def:Srf}.

\begin{theorem}
\label{thm:itera-infi}
Let $\bphi\in\K(X)^s$ whose components are multiplicatively  independent modulo constants, 
and let $f\in\K[X]$ be a non-special polynomial of degree at least $2$. 
Then, the following hold: 
\begin{itemize}
\item[(i)] there exists a non-negative integer $n_0$ depending only on $f,\bphi$ and $\K$ such that there are at most finitely 
many elements $\alpha \in \Kc$  for which
$\bphi\(f^{(n)}(\alpha)\)$ is a multiplicatively dependent point for some integer $n \ge n_0$; 

\item[(ii)] if furthermore the components of $\bphi$ cannot multiplicatively generate a power of a linear fractional function, 
then {\rm (i)} holds with $n_0=0$. 
\end{itemize}
\end{theorem}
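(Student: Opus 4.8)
The plan is to combine the structural description of multiplicatively dependent values from Lemma~\ref{lem:mult-Kab} with the sparsity growth bound of Lemma~\ref{lem:FZ-2}, and then to handle the remaining ``small $n$'' range by the multiplicative independence of iterates (Lemma~\ref{lem:Gao}) together with Theorem~\ref{thm:mult-Kc}. First I would set up part (i). Suppose $\alpha\in\Kc$ is such that $\bphi\bigl(f^{(n)}(\alpha)\bigr)$ is multiplicatively dependent for some $n$. Applying Lemma~\ref{lem:exponent} to $\bphi$ evaluated at the point $\beta:=f^{(n)}(\alpha)$, one obtains a nontrivial relation $\varphi_1(\beta)^{k_1}\cdots\varphi_s(\beta)^{k_s}=\zeta$ with $\max|k_i|\ll_{d_\K,\bphi}1$ and $\zeta\in\U$. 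As in the proof of Lemma~\ref{lem:mult-Kab}, this exhibits $\beta=f^{(n)}(\alpha)$ as a root of a polynomial $\Psi(X)$ whose coefficients are built from the coefficients of the $\varphi_i$ with bounded exponents, and of the form $\Psi(X)=P(X)-\zeta Q(X)$ where $P,Q$ are products of numerators/denominators of the $\varphi_i$. Crucially the number of monomials appearing in $\Psi$ is bounded only in terms of $\bphi$ (and the bounded $k_i$), i.e. $\Psi$ is a polynomial with a bounded number of terms, independent of $n$.

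Next I would exploit sparsity. The relation $\Psi\bigl(f^{(n)}(\alpha)\bigr)=0$ means that the rational function $\Psi\bigl(f^{(n)}(X)\bigr)$ — equivalently, after writing $f^{(n)}$ through its numerator/denominator, a polynomial obtained by substituting $f^{(n)}$ into the fixed sparse polynomial $\Psi$ — has $\alpha$ among its roots; but more to the point, $f^{(n)}(X)$ composed into each factor of $\Psi$ has controlled sparsity, so $\Psi(f^{(n)}(X))$ itself has at most $C(\bphi)$ many terms for an absolute-in-$n$ constant, after clearing denominators. On the other hand, by Lemma~\ref{lem:FZ-2} (using that $f$ is non-special of degree $\ge 2$), $\Sp\bigl(f^{(n)}(q(X))\bigr)\to\infty$ as $n\to\infty$ for any non-constant $q$; applying this to each numerator/denominator piece $f_i(f^{(n)}(X))$, $g_i(f^{(n)}(X))$ shows that for $n\ge n_0$, with $n_0=n_0(f,\bphi,\K)$, the polynomial $\Psi\bigl(f^{(n)}(X)\bigr)$ cannot have a root in $\Kc$ unless it vanishes identically — which is impossible since $\Psi\ne 0$ and $f^{(n)}$ is non-constant. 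More precisely, in this large-$n$ regime one invokes the Dvornicich--Zannier machinery exactly as in the proof of Theorem~\ref{thm:mult-Kc}: the sparsity of $f^{(n)}$ forces $\Psi(f^{(n)}(X))$, viewed in $Y=\zeta$, to have no $\Kc$-root for all but finitely many $\zeta$ (via Lemma~\ref{lem:cor1DZ}, whose hypothesis is supplied by Lemma~\ref{lem:DZcond} applied to $\varphi_1^{k_1}\cdots\varphi_s^{k_s}\circ f^{(n)}$, which is not a power of a linear fractional function once $n$ is large because its sparsity is large); combined with the finitely many choices of $(k_1,\dots,k_s)$ and $\zeta$, this yields finiteness of such $\alpha$. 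For part (ii), when the $\varphi_i$ cannot multiplicatively generate a power of a linear fractional function, I would note that for \emph{every} $n\ge 0$ the composed family $(\varphi_1\circ f^{(n)},\dots,\varphi_s\circ f^{(n)})$ still cannot multiplicatively generate a power of a linear fractional function: a relation $\prod(\varphi_i\circ f^{(n)})^{k_i}=\psi^\ell$ with $\psi$ linear fractional would give $\prod\varphi_i^{k_i}=\rho$ on the image of $f^{(n)}$, hence as rational functions (since $f^{(n)}$ is dominant), contradicting the hypothesis — here one uses that a rational function whose composition with a nonconstant map is a power of a linear fractional need itself be such a power, which follows by comparing zeros and poles as in Lemma~\ref{lem:DZcond}. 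Therefore Theorem~\ref{thm:mult-Kc} applies directly to each $\bphi\circ f^{(n)}$, but one needs uniformity in $n$; this is where the sparsity bound is used again to reduce to finitely many $n$ (namely $n<n_0$) via part (i), and for each of those finitely many $n$ one applies Theorem~\ref{thm:mult-Kc} to the single tuple $\bphi\circ f^{(n)}$ to get finiteness, then takes the union.

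The main obstacle I anticipate is making the ``bounded sparsity of $\Psi(f^{(n)}(X))$ versus growing sparsity of $f^{(n)}$'' dichotomy rigorous: one must check that substituting the sparse, $n$-independent polynomial $\Psi$ (a fixed product/difference of the $f_i,g_i$) does not destroy the sparsity lower bound coming from Lemma~\ref{lem:FZ-2}, and that a root of $\Psi(f^{(n)}(X))$ in $\Kc$ genuinely forces a Dvornicich--Zannier obstruction. Concretely, one needs that if $\varphi:=\varphi_1^{k_1}\cdots\varphi_s^{k_s}$ (fixed, non-power-of-linear-fractional) is composed with $f^{(n)}$, then for $n$ large the resulting rational function satisfies the hypothesis of Lemma~\ref{lem:cor1DZ}; the clean way is to apply Lemma~\ref{lem:DZcond} to $\varphi\circ f^{(n)}$ directly, using that $\varphi\circ f^{(n)}$ is not a power of a linear fractional function (again a zero/pole count, valid for \emph{all} $n$, not just large $n$), so that in fact the Dvornicich--Zannier step already works for every $n$; the sparsity of $f^{(n)}$ and Lemma~\ref{lem:FZ-2} then enter only to guarantee \textbf{uniformity} — that $n_0$ and the resulting finite exceptional set can be chosen independently of $n$ — by bounding the degree of the relevant polynomials and the number of bad $\zeta$ uniformly. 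I expect the bookkeeping of this uniformity, rather than any single hard inequality, to be the delicate part.
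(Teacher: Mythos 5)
Your plan for part~(i) has a genuine gap, and the mechanism you propose to close it would not work. You invoke Lemma~\ref{lem:FZ-2} to bound the sparsity of $\varphi_1^{k_1}\cdots\varphi_s^{k_s}\circ f^{(n)}$ (or of the pieces $f_i\circ f^{(n)}$, $g_i\circ f^{(n)}$), but that lemma bounds $\Sp\bigl(f^{(n)}(q(X))\bigr)$ — the iterate is the \emph{outer} function. Your compositions have $f^{(n)}$ on the \emph{inside}, so the lemma simply does not apply; and in any case ``large sparsity'' of the composite is not what you need. More fundamentally, the route ``apply Lemma~\ref{lem:cor1DZ} to $R_n(X,Y)=\varphi(f^{(n)}(X))-Y$ for each $n$'' only yields, for each separate $n$, finitely many bad $\zeta$ and hence finitely many bad $\alpha$, with bounds depending on $\deg R_n\asymp d^n$. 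Nothing in your proposal makes this uniform in $n$, and you acknowledge this yourself (``one needs uniformity in $n$'') without supplying a mechanism. Unioning over $n\ge n_0$ may well be infinite, so the argument does not close.

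The paper's proof of part~(i) avoids treating each $n$ separately. Having written $f^{(n)}(\alpha)=\gamma/(a-\eta)$ with $a$ in a finite set, $\eta\in\U$, and $\gamma$ of bounded house and denominator via Lemma~\ref{lem:mult-Kab}, it pulls that control \emph{backwards along the orbit}: the growth lemmas (Lemma~\ref{lem:itergrowthext}, built from Lemmas~\ref{lem:itergrowth Arch} and~\ref{lem:itergrowth nonArch}) bound the house and denominators of $(a-\eta)f^{(r)}(\alpha)$ for every $r=0,1,\dots,M$ where $M$ is a fixed large constant (and $n>M$). Loxton's representation (Lemma~\ref{lem:loxton}) then writes each of these as a short, bounded linear combination of roots of unity with coefficients from a fixed finite set. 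An infinite supply of such $\alpha$ gives infinitely many torsion points on a \emph{fixed} variety $\cZ\subseteq\Gm^{B(M+1)+1}$ cut out by the equations~\eqref{eq:itera-eq}; Laurent's torsion-point theorem produces a positive-dimensional torsion coset, which is parametrized monomially, yielding the functional identities~\eqref{eq:ratiterlox}. Those identities say $f^{(r)}(q(t))$ has at most $B+2$ terms for all $r\le M$, which contradicts Lemma~\ref{lem:FZ-2} applied in the \emph{correct} direction (outer iteration) once $M$ exceeds the threshold~\eqref{eq:M large}. None of the Loxton, iterate-growth, or Manin--Mumford steps appear in your sketch, and they are exactly what supply the uniformity over $n$ that you flag as ``the delicate part.'' Your part~(ii), by contrast, is essentially the paper's argument: reduce to finitely many $\beta$ via Theorem~\ref{thm:mult-Kc}, use part~(i) to dispose of $n\ge n_0$, and solve $f^{(n)}(\alpha)=\beta$ for the finitely many remaining $n$.
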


\begin{proof} 
(i) 
Let $\alpha\in\Kc$ be such that  $\bphi\(f^{(n)}(\alpha)\)$ is   multiplicatively dependent for some integer $n \ge 0$. 
Then, applying Lemma~\ref{lem:mult-Kab} we have 
$$
f^{(n)}(\alpha)=\frac{\gamma_{\alpha,n}}{a_{\alpha,n}-\eta_{\alpha,n}},
$$
where $a_{\alpha,n}$ is in a finite set $S \subseteq \K^*$, $\eta_{\alpha,n}\in\U \cap \K(\alpha)$, and $\gamma_{\alpha,n}\in\K(\alpha)$ such that 
\begin{equation}
\label{eq:house n}
\house{\strut (a_{\alpha,n}-\eta_{\alpha,n})f^{(n)}(\alpha)} \ll_{d_{\K},\bphi} 1, 
\end{equation}
where $d_{\K}=[\K:\Q]$, and there exists a sufficiently large integer 
$$
A\ll_{d_{\K},\bphi} 1
$$ 
such that $A(a_{\alpha,n}-\eta_{\alpha,n})f^{(n)}(\alpha)$ is an algebraic integer.

We apply now the method of~\cite[Theorem~2]{DZ} and~\cite[Theorem~1.2]{Ost}.  Indeed, let $M$
be  a sufficiently large positive integer, chosen to satisfy
\begin{equation}
\label{eq:M large}
M>\frac{(B+2)\log 5+\log 2016}{\log d}+5,
\end{equation}
where $B$  defined below is a constant depending only on $f,\bphi$ and $\K$. 

We also denote by $\cS_{f,\bphi}(M)$ the set of $\alpha\in\Kc$ such that the point  
$\bphi\(f^{(n)}(\alpha)\)$ is multiplicatively dependent for some integer $n>M$. 
Our purpose is to show that $\cS_{f,\bphi}(M)$ is a finite set. 

Let now $\alpha\in\cS_{f,\bphi}(M)$. 
Using~\eqref{eq:house n}, we apply Lemma~\ref{lem:itergrowthext} to conclude, since $\house{\strut a_{\alpha,n}-\eta_{\alpha,n}} \ll_{d_{\K},\bphi} 1$ 
and $A(a_{\alpha,n}-\eta_{\alpha,n})f^{(n)}(\alpha)$ is an algebraic integer, that
$$
\house{\strut (a_{\alpha,n}-\eta_{\alpha,n})f^{(r)}(\alpha)} \ll_{f,d_{\K},\bphi} 1,\quad 0\le r\le M,
$$
and there exists a positive integer $E\ll_{f,d_{\K},\bphi} 1$ with the property that $E(a_{\alpha,n}-\eta_{\alpha,n})f^{(r)}(\alpha)$ is an algebraic integer 
for any $0\le r\le M$.

 Applying now Lemma~\ref{lem:loxton} for $E(a_{\alpha,n}-\eta_{\alpha,n})f^{(r)}(\alpha)$, $r=0,1,\ldots,M$, there exist a positive integer 
$B$ (depending only on $f,\bphi,\K$) and a finite set 
$\cE$ (depending only on  $\K$)
such that we can write $E(a_{\alpha,n}-\eta_{\alpha,n})f^{(r)}(\alpha)$ in the form 
\begin{equation}
\label{eq:iterlox}
E(a_{\alpha,n}-\eta_{\alpha,n})f^{(r)}(\alpha)=c_{\alpha,r,1}\xi_{\alpha, r,1}+\cdots+c_{\alpha,r,B}\xi_{\alpha, r,B}, 
\end{equation}
where $c_{\alpha,r,i}\in \cE$ and $\xi_{\alpha, r,i}\in\U$.

By contradiction, suppose that $\cS_{f,\bphi}(M)$ is an infinite set. 
Then, since both $S$ and $\cE$ are finite sets, we can choose an infinite subset $\cT_{f,\bphi}(M)$ of $\cS_{f,\bphi}(M)$ 
such that for any $\alpha \in \cT_{f,\bphi}(M)$, the coefficients $a_{\alpha,n}$ and $c_{\alpha,r,i} \in \cE$ in~\eqref{eq:iterlox} are all fixed (independent of $\alpha$) 
for $r=0,1,\ldots,M$ and $i=1,2,\ldots,B$.  
For these fixed coefficients, to simplify the notation from now on, we denote
$$
a=a_{\alpha,n} \quad \textrm{and} \quad c_{r,i}=c_{\alpha,r,i}.
$$ 
So, it suffices to consider the elements in $\cT_{f,\bphi}(M)$. 

We use the first equation corresponding to $r=0$ to replace $\alpha \in \cT_{f,\bphi}(M)$ on the left-hand side of~\eqref{eq:iterlox} and thus consider 
the equations with unknowns $Y, X_{r,i}, r=0,1,\ldots,M, i=1,2,\ldots,B$: 
\begin{equation}  \label{eq:itera-eq}
\begin{split}
E(a-Y)f^{(r)}&\Big(\frac{c_{0,1}X_{0,1}+\cdots+c_{0,B}X_{0,B}}{E(a-Y)}\Big)\\
&=c_{r,1}X_{r,1}+\cdots+c_{r,B}X_{r,B}, \qquad r=1,\ldots,M. 
\end{split}
\end{equation}
Then, for any $\alpha \in \cT_{f,\bphi}(M)$, the points  $(\eta_{\alpha,n},\xi_{\alpha, r,i})$, $r=0,1,\ldots,M$, $i=1,2,\ldots,B$, are torsion points on the 
variety $\cZ \subseteq \Gm^{B(M+1)+1}$ defined by the equations in~\eqref{eq:itera-eq}. 
Since $\cT_{f,\bphi}(M)$ is an infinite set and in view of~\eqref{eq:iterlox}, there are infinitely many such points. 

By the toric analogue of the Manin--Mumford conjecture  (also called the {\it torsion points theorem}) proved by Laurent~\cite{Laurent}  
and also more elementary by Sarnak and Adams~\cite{SA}, 
there exists a $1$-dimensional torsion coset of $\Gm^{B(M+1)+1}$ contained in the Zariski closure of the torsion points in the variety $\cZ$.
We can parametrize this coset by $X_{r,i}=\beta_{r,i}t^{e_{r,i}}$ and $Y=\tau t^{\ell}$ with a parameter $t$, where $\beta_{r,i},\tau$ are roots of unity and $e_{r,i},\ell$ are integers, not all zero, 
and so we obtain the following identities
\begin{equation}
\label{eq:ratiterlox}
f^{(r)}\(\frac{\sum_{j=1}^{B}c_{0,j}\beta_{0,j}t^{e_{0,j}}}{E(a-\tau t^{\ell})}\)=\frac{\sum_{j=1}^{B}c_{r,j}\beta_{r,j}t^{e_{r,j}}}{E(a-\tau t^{\ell})}, \qquad r=1,\ldots,M.
\end{equation}

Now, for 
$$
q(t)=\frac{\sum_{j=1}^{B}c_{0,j}\beta_{0,j}t^{e_{0,j}}}{E(a-\tau t^{\ell})},
$$
the equation~\eqref{eq:ratiterlox} shows that 
 $f^{(r)}(q(t))$, $r =1, \ldots, M$, is a   rational function  having all together 
 at most $B+2$  terms. 
Since $f$ is a non-special polynomial of degree at least $2$,  we directly apply Lemma~\ref{lem:FZ-2} to conclude that we must have
$$
B+2 \ge \frac{1}{\log 5}\((M-5)\log d - \log 2016\),
$$ 
which contradicts the choice of $M$ as in~\eqref{eq:M large}. 
Thus, $\cS_{f,\bphi}(M)$ is a finite set. 
Taking $n_0 = M+1$, this concludes the proof of the first part (i).

(ii) 
From Theorem~\ref{thm:mult-Kc}, we know that there are only finitely many elements $\beta\in\Kc$ such that $\bphi(\beta)$ is multiplicatively dependent. 
We can fix one such $\beta\in\Kc$, and we are thus left to prove that there are only finitely many $\alpha\in\Kc$ such that $f^{(n)}(\alpha)=\beta$ for some $n\ge 0$.

Let $M$ be a positive integer chosen to satisfy~\eqref{eq:M large}. 
It has been proved in the above that $\cS_{f,\bphi}(M)$ is a finite set. 
So, we only need to consider $n\le M$. 
Since $\beta\in\Kc$ is fixed,  
we conclude that there are only finitely many $\alpha\in\Kc$ satisfying $f^{(n)}(\alpha)=\beta$ for any $0\le n\le M$.
This  completes the proof. 
\end{proof}

\begin{remark}    \label{rem:general}
{\rm 
It is plausible that the result in Theorem~\ref{thm:itera-infi} and the rest of results of this section 
also hold when we replace the polynomial $f$ by a non-special rational function $g/h \in \K(X)$ with $g,h \in \K[X]$ and $\deg g - \deg h >1$; see~\cite{Chen, Ost}. 
}
\end{remark}

\begin{remark}
{\rm 
We note that a weaker statement of Theorem~\ref{thm:itera-infi} (ii) may also follow from~\cite[Theorems~1.5 and~2.5]{Chen}, 
under several other restrictions on the polynomial $f$. 
Indeed, from the proof of Theorem~\ref{thm:itera-infi} (ii), we can fix an element $\beta\in\Kc$ and reduce the problem to proving that there are only finitely many $\alpha\in\Kc$ such that $f^{(n)}(\alpha)=\beta$ for some $n\ge 1$.  In particular this means $\house{\strut f^{(n)}(\alpha)} \ll 1$.
We apply now~\cite[Theorem~1.5]{Chen} to conclude that there are only finitely many such $\alpha\in\Kc$, under the condition that 
 there does not exist a rational function $S(X)\in\Kc(X)$ such that $f(S(X))$ is a ``short" Laurent polynomial.
}
\end{remark}

\begin{remark}
{\rm 
From Theorem~\ref{thm:itera-infi} (ii), taking $s=1$ and $\bphi=f$, 
one can recover the main result in~\cite[Theorem~1.2]{Ost}: there are only finitely many $\alpha\in\K^c$ such that $f^{(n)}(\alpha)\in\U$ for some integer $n > 0$.
}
\end{remark}

Moreover, combining Theorem~\ref{thm:itera-infi} with Lemma~\ref{lem:Gao}, we have the following corollary.

\begin{corollary}
\label{cor:consecmultdepinf}
Let  $f \in\K[X]$ be non-special and of degree at least $2$. Then, for any integer $s \ge 1$, 
the following hold: 
\begin{itemize}
\item[(i)] there are only  finitely many $\alpha\in\Kc$ such that 
the $s$ consecutive iterations 
$f^{(n+1)}(\alpha),$ $\ldots,f^{(n+s)}(\alpha)$ are multiplicatively dependent for infinitely many integers $n \ge 0$; 

\item[(ii)] if furthermore the iterations of $f$ 
cannot   multiplicatively generate a power of a linear fractional function, 
then there are only finitely many $\alpha\in\Kc$ such that
the $s$ consecutive iterations 
$f^{(n+1)}(\alpha), \ldots,f^{(n+s)}(\alpha)$ are multiplicatively dependent for some integer $n\ge 0$.
\end{itemize}
\end{corollary}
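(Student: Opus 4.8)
The plan is to deduce both parts directly from Theorem~\ref{thm:itera-infi} by choosing the right vector $\bphi$. The natural candidate is $\bphi = \bigl(f^{(1)}, \ldots, f^{(s)}\bigr) \in \K(X)^s$, so that for any $\alpha \in \Kc$ and any $n \ge 0$ we have the identity
\begin{equation*}
\bphi\bigl(f^{(n)}(\alpha)\bigr) = \bigl(f^{(n+1)}(\alpha), \ldots, f^{(n+s)}(\alpha)\bigr).
\end{equation*}
Thus $f^{(n+1)}(\alpha), \ldots, f^{(n+s)}(\alpha)$ being multiplicatively dependent is exactly the condition ``$\bphi\bigl(f^{(n)}(\alpha)\bigr)$ is a multiplicatively dependent point'' appearing in Theorem~\ref{thm:itera-infi}. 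To invoke that theorem we must verify its hypotheses: $f$ is non-special of degree at least $2$ by assumption, and the components $f^{(1)}, \ldots, f^{(s)}$ of $\bphi$ must be multiplicatively independent modulo constants. For part~(i) this last point is precisely Lemma~\ref{lem:Gao}, once we check that a non-special $f$ of degree at least $2$ is not a monomial --- indeed a monomial $cX^d$ is conjugate to $\pm X^d$ (when $c = \pm 1$, and in general one can absorb the constant by conjugation $X \mapsto \lambda X$; more carefully, $cX^d$ with $c \ne 0$ is $\PGL_2$-conjugate to $X^d$ after scaling), hence special, so excluding special $f$ excludes monomials. Therefore Lemma~\ref{lem:Gao} applies and gives the multiplicative independence modulo constants of $f^{(1)}, \ldots, f^{(s)}$, which is the remaining hypothesis of Theorem~\ref{thm:itera-infi}~(i).

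With these hypotheses in place, part~(i) of the corollary is immediate: Theorem~\ref{thm:itera-infi}~(i) furnishes an $n_0 = n_0(f, \bphi, \K)$ such that only finitely many $\alpha \in \Kc$ satisfy ``$\bphi\bigl(f^{(n)}(\alpha)\bigr)$ is multiplicatively dependent for some $n \ge n_0$''. If $\alpha \in \Kc$ is such that $f^{(n+1)}(\alpha), \ldots, f^{(n+s)}(\alpha)$ are multiplicatively dependent for infinitely many $n \ge 0$, then in particular this holds for some $n \ge n_0$, so $\alpha$ lies in that finite set. Hence only finitely many such $\alpha$ exist. For part~(ii), the extra hypothesis is that the iterates of $f$ cannot multiplicatively generate a power of a linear fractional function; I need to reformulate this as the hypothesis of Theorem~\ref{thm:itera-infi}~(ii), namely that the components of $\bphi = (f^{(1)}, \ldots, f^{(s)})$ cannot multiplicatively generate a power of a linear fractional function. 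But a relation $\prod_{i=1}^s (f^{(i)})^{k_i}$ being a power of a linear fractional function is literally a multiplicative relation among iterates of $f$ of the forbidden type, so the two conditions coincide. Then Theorem~\ref{thm:itera-infi}~(ii) gives the conclusion with $n_0 = 0$: only finitely many $\alpha \in \Kc$ have $\bphi\bigl(f^{(n)}(\alpha)\bigr)$ multiplicatively dependent for \emph{some} $n \ge 0$, which translates back to only finitely many $\alpha$ with $f^{(n+1)}(\alpha), \ldots, f^{(n+s)}(\alpha)$ multiplicatively dependent for some $n \ge 0$.

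The main thing to be careful about --- not a deep obstacle, but the point that needs attention --- is the bookkeeping around indices: Theorem~\ref{thm:itera-infi} is stated for $\bphi(f^{(n)}(\alpha))$, and the corollary speaks of the block $f^{(n+1)}, \ldots, f^{(n+s)}$, so one must pin down that $\bphi = (f^{(1)}, \ldots, f^{(s)})$ rather than $(f^{(0)}, \ldots, f^{(s-1)}) = (X, f, \ldots, f^{(s-1)})$; the latter choice would fail because $X$ together with the other iterates need not be multiplicatively independent modulo constants in general, and in any case $\varphi_1 = X$ is a linear fractional function which would spoil part~(ii). The other routine check is the excluded-monomial step needed to apply Lemma~\ref{lem:Gao}, i.e.\ confirming that ``non-special of degree $\ge 2$'' implies ``not a monomial'', which follows from Definition~\ref{def:Srf} as indicated above. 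No further estimates are needed --- the entire argument is a specialisation of Theorem~\ref{thm:itera-infi} combined with Lemma~\ref{lem:Gao}.
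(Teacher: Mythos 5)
Your proof is correct and is exactly the paper's intended argument: the paper proves the corollary with the single remark that it follows ``combining Theorem~\ref{thm:itera-infi} with Lemma~\ref{lem:Gao}'', and your choice $\bphi = (f^{(1)},\ldots,f^{(s)})$, together with the identity $\bphi\bigl(f^{(n)}(\alpha)\bigr) = \bigl(f^{(n+1)}(\alpha),\ldots,f^{(n+s)}(\alpha)\bigr)$ and the observation that a non-special polynomial of degree $\ge 2$ is not a monomial (so Lemma~\ref{lem:Gao} gives multiplicative independence modulo constants of the iterates), is precisely the combination the authors have in mind. The index bookkeeping and the reduction of part~(ii) to Theorem~\ref{thm:itera-infi}~(ii) are handled correctly.
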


\begin{remark} 
{\rm 
The condition of Corollary~\ref{cor:consecmultdepinf} (ii) on the iterations is not very 
restrictive. For example, it is easy to see that if $0$ is a not a periodic point of $f$, then all the iterations 
of $f$ are relatively prime. 
}
\end{remark}

\subsection{Finiteness of  multiplicatively dependent points in orbits at arbitrary positions }
\label{sec:mult dep arb}

Recall that $\ZK$ is the ring of integers of $\K$. 
Our main result below relies on some finiteness results on the number of perfect powers amongst polynomial  values (see~\cite{BEG}), as well as the result of Faltings~\cite{Fa83} on the finiteness of $\K$-rational points on curves of genus greater than one.

\begin{theorem}
\label{thm: 2multdep-arb}
Let $f\in\K[X]$ be a polynomial without multiple roots, of degree $d\ge 3$ or, if $d = 2$, we also assume that $f^{(2)}$ has no multiple roots.
Then, there are only finitely many elements $\alpha \in \K$ such that for some  distinct integers $ m, n\ge 0$
the values $f^{(m)}(\alpha)$ and $f^{(n)}(\alpha)$ are multiplicatively dependent.
\end{theorem}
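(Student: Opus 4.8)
The plan is to reduce the statement to two finiteness ingredients: the B\'erczes--Evertse--Gy\H ory bound on perfect powers in polynomial values~\cite{BEG} to control the case when $m,n$ are close together (or more precisely when $f^{(n-m)}$ has small degree), and Faltings' theorem~\cite{Fa83} to handle the case when one exponent is large. First I would fix $\alpha\in\K$ together with distinct integers $m<n$ and a nontrivial multiplicative relation $f^{(m)}(\alpha)^a f^{(n)}(\alpha)^b = 1$ with $(a,b)\neq(0,0)$. Writing $\beta = f^{(m)}(\alpha)$ and $k = n-m\ge 1$, the relation becomes $\beta^a (f^{(k)}(\beta))^b = 1$, so $\beta$ is a point in $\K$ on the curve $\cX_k\colon Y^b = X^{-a}$ with $Y = f^{(k)}(X)$; equivalently, $\beta^a f^{(k)}(\beta)^b=1$. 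The key structural point is that, because $f$ has no multiple roots (and $f^{(2)}$ has none when $d=2$), each iterate $f^{(k)}$ has no multiple roots either, by the standard ramification argument: a multiple root of $f^{(k)}$ would force either a multiple root of $f^{(k-1)}$ or a critical value of $f$ that is a root of $f^{(k-1)}$, and one checks this cannot happen for all $k$ unless $f$ itself fails the hypothesis; the $d=2$ proviso is exactly what is needed to start the induction. (Here I would use Lemma~\ref{lem:T4T2}-style reasoning only if a Chebyshev-type exception arises; more simply one argues directly with discriminants.)

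With $f^{(k)}$ squarefree of degree $d^k$, the curve $X^a f^{(k)}(X)^b=1$, viewed in the $(X,Z)$-plane via $Z^{|b|} = $ (a monomial times $f^{(k)}(X)^{\pm1}$), is a superelliptic-type curve whose genus grows with $k$: once $d^k$ is large enough relative to $|a|,|b|$, its normalisation has genus $>1$, and Faltings gives finiteness of its $\K$-rational points, hence finitely many possible $\beta$. But $a,b$ are \emph{not} a priori bounded, so the clean way to organise this is: (1) if $k$ is large, then $f^{(k)}(\beta)$ has large house unless $\beta$ lies in a fixed bounded region (Lemma~\ref{lem:itergrowth Arch} and its non-archimedean companion Lemma~\ref{lem:itergrowth nonArch}), and a nontrivial relation $\beta^a f^{(k)}(\beta)^b=1$ between two algebraic numbers in a number field of bounded degree, one of which has large height and the other is essentially $f^{(m)}(\alpha)$, forces both to be $\cS$-units for a bounded set $\cS$ — at which point $f^{(k)}(\beta)$ being an $\cS$-unit and squarefree of large degree contradicts, via~\cite{BEG} (finiteness of $\cS$-unit values of a squarefree polynomial), the existence of infinitely many $\beta$; (2) if $k$ is bounded, then $n-m$ is bounded, and we instead run the argument on $\beta=f^{(m)}(\alpha)$ with $m$ large: by Lemma~\ref{lem:itergrowth Arch}(ii) backward control, $\house{\strut f^{(m)}(\alpha)}$ and $\house{\strut f^{(n)}(\alpha)}$ are either both bounded or the sequence escapes, and multiplicative dependence of two escaping orbit elements is incompatible with the strict growth in Lemmas~\ref{lem:itergrowth Arch}--\ref{lem:itergrowth nonArch} once we know (from~\cite{BEG} applied to $f^{(k)}$) that the pair cannot both be forced into the $\cS$-unit regime.

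Concretely I would split into cases as follows. \textbf{Case A: $m=0$.} Then $\alpha^a f^{(n)}(\alpha)^b=1$. If $n$ is bounded this is finitely many fixed curves; squarefreeness of $f^{(n)}$ and Faltings (genus $>1$ for $d^n$ large, for each fixed $(a,b)$) together with an $\cS$-unit reduction to make $(a,b)$ effectively finite give finiteness. \textbf{Case B: $m\ge 1$.} Put $\beta=f^{(m)}(\alpha)$. Either $\alpha$ lies in the bounded region where orbits do not escape — then $\alpha$ has bounded house and bounded degree, so finitely many $\alpha$ — or the orbit of $\alpha$ escapes at every place dividing a suitable $\cS$, forcing (by the strict monotonicity in Lemmas~\ref{lem:itergrowth Arch} and~\ref{lem:itergrowth nonArch}) that $f^{(m)}(\alpha)$ and $f^{(n)}(\alpha)$ be $\cS$-units for a \emph{bounded} $\cS$; then applying~\cite{BEG} to the squarefree polynomial $f^{(n-m)}$ at the $\cS$-integral point $\beta$ (its value $f^{(n-m)}(\beta)$ must be, up to the monomial $\beta^{-a/b}$-type factor, a perfect $|b|$-th power times an $\cS$-unit) yields finitely many $\beta$, hence finitely many $\alpha$ since $f^{(m)}$ is finite-to-one. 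The \textbf{main obstacle} is exactly the uniformity in the exponents $a,b$: Faltings and~\cite{BEG} are each finiteness statements \emph{for a fixed curve / fixed polynomial}, and one must first bound $|a|,|b|$, which is where reducing to $\cS$-units with $\cS$ of bounded size (via the growth lemmas, playing the role that Lemma~\ref{lem:exponent} plays elsewhere in the paper) is essential — without that bound the union over all $(a,b)$ is infinite and the argument collapses. A secondary technical point is verifying that the relevant superelliptic curves genuinely have genus $>1$ once $d^{n-m}$ (or $d^n$) exceeds an explicit threshold depending on $|a|,|b|$, which is a routine genus computation but must be stated carefully, and checking the squarefreeness of all iterates, where the $d=2$ hypothesis on $f^{(2)}$ is used.
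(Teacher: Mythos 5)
Your proposal correctly identifies the two finiteness inputs (B\'erczes--Evertse--Gy\H ory and Faltings) and the high-level shape of the argument (force a perfect-power relation and invoke one of these), and you correctly flag that bounding the exponents is the central obstacle. But there are genuine gaps, and your proposed ways around the obstacle do not work.

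First, the claim that every iterate $f^{(k)}$ is squarefree under the stated hypotheses is false. Squarefreeness of $f^{(k)}$ is equivalent to: no critical point $\gamma$ of $f$ satisfies $f^{(i)}(\gamma)=0$ for some $1\le i\le k$. The hypothesis controls this only for $i\le 2$. For instance, take $d=2$, $f=X^2+c$ with $c$ a root of $c^3+2c^2+c+1$; then $f$ and $f^{(2)}$ are squarefree but $f^{(3)}(0)=0$, so $f^{(3)}$ has a multiple root. In fact the paper's proof never needs squarefreeness of higher iterates: the Faltings step is run only on the single curve $f(X)=Y^{d^3}$ (and the BEG step uses $f$, or $f^{(2)}$ only when $(d,t)=(2,2)$, which is exactly why the $d=2$ proviso appears). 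So this part of your reduction is both incorrect and unnecessary.

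Second, the assertion that a nontrivial relation $\beta^a f^{(k)}(\beta)^b=1$ with large heights forces both to be $\cS$-units for a bounded $\cS$ is false; multiplicative dependence says nothing about which primes can appear (take $\beta=p$, $\gamma=p^2$ with $p$ an arbitrary large prime). Because this is your mechanism for bounding $|a|,|b|$, the whole argument collapses at the point you yourself identify as the main obstacle. The paper's route is quite different and is the key idea you are missing: after normalising to $(f^{(m)}(\alpha))^r=\eta(f^{(n)}(\alpha))^t$ with $1\le r<t$, $\gcd(r,t)=1$, choose $a,b$ with $ar+bt=1$ and deduce
\[
f^{(m)}(\alpha)=\eta^a\bigl((f^{(n)}(\alpha))^a(f^{(m)}(\alpha))^b\bigr)^t,
\]
i.e.\ $f^{(m)}(\alpha)$ is a perfect $t$-th power up to a root of unity. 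Then one splits on whether $\alpha\in\ZKSf$: if yes, B\'erczes--Evertse--Gy\H ory applies directly both to bound $t$ and to bound $\h(f^{(m-1)}(\alpha))$; if no, picking a prime $\fp\notin\cS_f$ with $v_\fp(\alpha)<0$ gives $v_\fp(f^{(j)}(\alpha))=d^j v_\fp(\alpha)$, which forces $r=1$, $t=d^{m-n}$, and so (when $m>n+2$) $f^{(m)}(\alpha)$ is a $d^3$-th power; now Faltings on the single fixed curve $f(X)=Y^{d^3}$ finishes. The short cases $m-n\le 2$ are handled separately by Lemma~\ref{lem:exponent}, which directly bounds $r,t$ because $f^{(m-n)}(X)$ and $X$ are multiplicatively independent modulo constants. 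Your proposal has no analogue of the Bezout trick, the $\cS_f$-integer dichotomy, or the valuation computation pinning down $(r,t)$, and these are precisely what make the unbounded-exponent problem tractable.
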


\begin{proof}  
Let $d_{\K}=[\K:\Q]$, and write the $d_{\K}$ embeddings of $\K$ into $\C$ as $\sigma_1,\ldots,\sigma_{d_{\K}}$. 
For $i=1,\ldots,d_{\K}$, let   $f_i=\sigma_i(f)$.

Choose now a sufficiently large number 
$N > L$ (see the end of the proof), where $L$ is defined by~\eqref{eq:L}. 
Denote by $\cT(N,\K)$ the set  of elements $\alpha \in \K$ with $\house{\strut \alpha} \le N$. 
Then, $\cT(N,\K)$ is a set of bounded height contained in $\K$, and so it is  a finite set. 
Thus, we only need to consider elements in the set $\K \setminus \cT(N,\K)$. 

Now, for $\alpha \in \K \setminus \cT(N,\K)$, 
assume that there exists a pair of non-negative integers $(m,n)$ with $m>n$ such that
 for some integers $k$ and $\ell$ with $(k,\ell) \ne (0,0)$, we have 
\begin{equation}
\label{eq:mult kel}
\(f^{(m)}(\alpha)\)^k = \(f^{(n)}(\alpha)\)^\ell.
\end{equation} 
Note that there is an embedding, say $\sigma_j$, such that $|\sigma_j(\alpha)| = \house{\strut \alpha}$, 
and so $|\sigma_j(\alpha)| > N$.  
By~\eqref{eq:mult kel} we have 
$$
\(f_j^{(m)}(\sigma_j(\alpha))\)^k = \(f_j^{(n)}(\sigma_j(\alpha))\)^\ell.
$$
By the choices of $\sigma_j(\alpha),m$ and $n$, using Lemma~\ref{lem:itergrowth Arch} (applied with  $\sigma_j(\alpha)$ instead of $\alpha$) we obtain 
\begin{equation}    \label{eq:fj}
|f_j^{(m)}(\sigma_j(\alpha))| > |f_j^{(n)}(\sigma_j(\alpha))| \ge |\sigma_j(\alpha)| > N. 
\end{equation}
So, we cannot have $k=0$ or $\ell = 0$. 
Then, we must have 
$$
1\le k<\ell.
$$
Let 
$$
r = \frac{k}{\gcd(k,\ell)}, \qquad t = \frac{\ell}{\gcd(k,\ell)}. 
$$ 
Clearly, 
$$
1\le r < t, \qquad \gcd(r,t)=1. 
$$
From~\eqref{eq:mult kel}, we have 
\begin{equation}
\label{eq:mult rs}
\(f^{(m)}(\alpha)\)^r = \eta\(f^{(n)}(\alpha)\)^t
\end{equation} 
for some $\eta\in \U\cap \K$. 

Since $\gcd(r,t)=1$, we can now find some integers $a$ and $b$ with 
$ar + bt =1$. Hence, using~\eqref{eq:mult rs} we get 
\begin{equation}
\label{eq:s pow}
f^{(m)}(\alpha) = \eta^a \(\(f^{(n)}(\alpha)\)^a \(f^{(m)}(\alpha)\)^b\)^t.
\end{equation}

Now, given a polynomial $g\in \K[X]$ we can write it as 
$$
g(X) = \frac{1}{D} G(X)
$$
with $G\in \ZK[X]$ and a positive integer $D$, both are uniquely defined by the
minimality condition on $D$. 
Then we use  $\cS_g$ to denote the subset of places of $\K$, which consists of all the infinite places and all the finite places corresponding to  the prime ideal divisors of  $D$ and of the leading coefficient of $G$.

One can easily verify that 
\begin{equation}
\label{eq:Sm S}
\cS_{f^{(m)}} \subseteq \cS_{f}, \qquad m =1,2, \ldots.
\end{equation}

Furthermore, for a prime ideal $\fp$ of $\ZK$ we use $v_\fp(\vartheta)$ 
to denote the (additive) valuation of $\vartheta \in \K$ at the place corresponding to $\fp$.  
We denote by $\ZKSf$  the set of $\cS_f$-integers in $\K$, that is, 
the set of $\vartheta \in \K$ with $v_\fp(\vartheta)\ge 0$ for any $\fp \not \in \cS_f$
(alternatively,  $v_\fp(\vartheta)< 0$ implies $\fp \in \cS_f$). 
Note that $f(\alpha)$ is in $\Z_{\K,\cS_f}$ for $\alpha \in \Z_{\K,\cS_f}$.

Now, we first assume that $m = n+1$. 
From~\eqref{eq:mult rs}, we have 
\begin{equation}
\label{eq:mult rsn}
\(f\(f^{(m-1)}(\alpha)\)\)^r = \eta\(f^{(m-1)}(\alpha)\)^t. 
\end{equation} 
Clearly, the polynomials $f(X)$ and $X$ are multiplicatively independent modulo constants. 
Applying Lemma~\ref{lem:exponent} to~\eqref{eq:mult rsn} and noticing that there are only finitely many roots of unity in $\K$, 
 we obtain that the exponents $r$ and $t$ are upper bounded in terms of $f$ and $\K$ only. 
 So,  there are only finitely many possible values of $f^{(m-1)}(\alpha)$ satisfying~\eqref{eq:mult rsn}. 
 Hence, we have 
 \begin{equation}
\label{eq:m=n+1}
\house{\strut f^{(m-1)}(\alpha)} \ll_{f,\K} 1.  
\end{equation}

We now assume that $m = n+2$. 
From~\eqref{eq:mult rs}, we have 
\begin{equation}
\label{eq:mult rsn2}
\(f^{(2)}\(f^{(m-2)}(\alpha)\)\)^r = \eta\(f^{(m-2)}(\alpha)\)^t. 
\end{equation} 
Since $f$ only has simple roots and its degree $d \ge 2$, the polynomials $f^{(2)}(X)$ and $X$ are multiplicatively independent modulo constants. 
As the above, using Lemma~\ref{lem:exponent}, we deduce that there are only finitely many possible values of $f^{(m-2)}(\alpha)$ satisfying~\eqref{eq:mult rsn2}. 
 Hence, we have 
$$
\house{\strut f^{(m-2)}(\alpha)} \ll_{f,\K} 1,
$$
which ensures that~\eqref{eq:m=n+1} holds in this case too.

In the following, we assume that $m > n+2$. 

We distinguish between two cases: one when~\eqref{eq:s pow} holds for 
$\alpha \in \ZKSf$ and one for $\alpha \not \in \ZKSf$.

\subsubsection*{Case~I: $\alpha \in  \ZKSf$} 
In this case, 
since $f^{(m)}(\alpha) \in  \ZKSf$ and $\eta \in \U\cap \ZK$, by~\eqref{eq:s pow} we also have 
$$
\(f^{(n)}(\alpha)\)^a \(f^{(m)}(\alpha)\)^b \in  \ZKSf 
$$
(because it is in $\K$ and its $t$-th power is in $\ZKSf$). 

Write 
$$
f^{(m)}(\alpha)  = f\(f^{(m-1)}(\alpha) \),
$$
or as
$$
f^{(m)}(\alpha)  = f^{(2)}\(f^{(m-2)}(\alpha) \)
$$
if $(d,t) = (2, 2)$. 
By a result of B{\'e}rczes,   Evertse and Gy{\" o}ry~\cite[Theorem~2.3]{BEG}, we obtain that the exponent $t \ge 2$ in~\eqref{eq:s pow} is upper bounded in terms of $f$ and $\K$. 
Applying now~\cite[Theorems~2.1 and~2.2]{BEG} to~\eqref{eq:s pow}, we conclude that 
$$
\h\(\strut f^{(m-1)}(\alpha)\)\ll_{f,\K} 1.  
$$
Since $f^{(m-1)}(\alpha)\in\K$, by the Northcott theorem we conclude that there are only finitely many possible values of $f^{(m-1)}(\alpha)$, and thus the inequality~\eqref{eq:m=n+1} holds again.

\subsubsection*{Case~II: $\alpha \not \in  \ZKSf$} 
We choose a prime ideal $\fp$ of $\Z_\K$ with 
$$
\fp\not \in \cS_f \mand v_\fp(\alpha) < 0.
$$
Then, using~\eqref{eq:Sm S}, we see that $\fp\not \in \cS_{f^{(m)}}$ and thus
we easily derive
\begin{equation}
\label{eq:ord fm}
 v_\fp\(f^{(m)}(\alpha)\) = d^m  v_\fp(\alpha).
\end{equation}
Indeed, let 
$$
f^{(m)}(X) = \frac{1}{E} \sum_{j=0}^{d^m}A_j X^j
$$
where $E, A_{d^m}, \ldots, A_0 \in \ZK$ with $ v_\fp\( A_{d^m}E\) = 0$.
Then 
\begin{equation}
\label{eq:ord a fm}
\begin{split}
 v_\fp\(\alpha^{-d^m}f^{(m)}(\alpha)\) &= 
 v_\fp\(E^{-1}\(A_{d^m} + \sum_{j=0}^{d^m-1}A_j \alpha^{j - d^m}\)\)\\
& =  v_\fp\(A_{d^m}  + \sum_{j=0}^{d^m-1}A_j \alpha^{j - d^m}\) = 0,
\end{split}
\end{equation}
since $v_\fp\(A_{d^m}\) = 0$, while
\begin{align*}
v_\fp\( \sum_{j=0}^{d^m-1}A_j \alpha^{j - d^m}\) & = v_\fp\(\alpha^{-1}\)
+ v_\fp\( \sum_{j=0}^{d^m-1}A_j \alpha^{j - d^m+1}\) \\
&\ge v_\fp\(\alpha^{-1}\)
+ \min_{0 \le j \le d^m-1}v_\fp\(A_j \alpha^{j - d^m+1}\) \\
& \ge v_\fp\(\alpha^{-1}\)> 0.
\end{align*}
Clearly~\eqref{eq:ord a fm} implies~\eqref{eq:ord fm}. 
 
Hence, recalling~\eqref{eq:mult rs}, we conclude that 
$$
 r d^m  v_\fp(\alpha) =  t d^n  v_\fp(\alpha).
$$
Since $1 \le r < t$, $\gcd(r,t)=1$ and $v_\fp(\alpha)\ne 0$, we conclude that 
$r=1$ and $t = d^{m-n}$, and moreover 
\begin{equation}
\label{eq:mult r1}
f^{(m)}(\alpha) = \eta\(f^{(n)}(\alpha)\)^{d^{m-n}}. 
\end{equation} 
We enlarge $\K$ to a field $\L$ such that it contains all $d^3$-th roots of roots of unity in $\K$. Noticing $m>n+2$,  we obtain from~\eqref{eq:mult r1}
$$
f^{(m)}(\alpha) =w^{d^3}
$$
for some $w\in\L$.
Since $f$ is of degree $d \ge 2$ and has only simple roots,  the curve $f(X)=Y^{d^3}$ is 
smooth of genus greater than $1$, see~\cite[Theorem A.4.2.6]{HinSil}.   By the celebrated result of Faltings~\cite{Fa83},
 we know that the curve $f(X)=Y^{d^3}$ has only finitely many $\L$-rational points, and in particular this implies that we have the inequality~\eqref{eq:m=n+1} again. 

Therefore, we can always choose the constant $N$ above to be sufficiently large (depending on $f,\K$) such 
that~\eqref{eq:m=n+1} can be written as
$$
\house{\strut f^{(m-1)}(\alpha)} < N.
$$
This together with Lemma~\ref{lem:itergrowth Arch} (ii) implies that $\house{\strut \alpha} \le N$, 
which however contradicts  the assumption $\house{\strut \alpha} > N$.
Therefore, we see that there is no  $\alpha \in \K \setminus \cT(N,\K)$ which satisfies~\eqref{eq:mult kel}.
This   completes the proof. 
\end{proof}

\begin{remark}
{\rm 
We note that the assumption that $f^{(2)}$ has no multiple roots when $d=2$ in Theorem~\ref{thm: 2multdep-arb} is equivalent to imposing that the critical point of $f$ is not a root of $f^{(2)}$.
}
\end{remark}

We remark that since Faltings' theorem is ineffective, the result in Theorem~\ref{thm: 2multdep-arb} is also ineffective. 

We note that Theorem~\ref{thm: 2multdep-arb} also implies that if we fix a non-preperi\-o\-dic point $\alpha\in\K$, then there are only finitely many $n\ge 1$ such that $\alpha$ and $f^{(n)}(\alpha)$ are multiplicatively dependent. However, such a conclusion can be easily obtained in much greater generality as in the next result.

We also note that if $\varphi\in\K[X]$ is of degree at least $2$ and not special, then by~\cite[Theorem~2]{DZ} there are only finitely many preperiodic points $\alpha\in\Kc$ of $\varphi$. Thus, we look at multiplicative relations in the orbits of non-preperiodic points.

\begin{theorem}
\label{thm: 2multdep-init}
Let $\varphi \in \K(X)$ be of degree $d \ge 2$ and not of the form $ \beta X^{\pm d}$ with $\beta \in \K^*$
and let $\alpha\in\ov\Q\setminus\U$ be non-preperiodic  for $\varphi$. 
Then, there are only finitely many positive integers $n$ such that $\alpha$ and $\varphi^{(n)}(\alpha)$ are multiplicatively dependent.
\end{theorem}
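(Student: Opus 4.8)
The plan is to fix the non-preperiodic point $\alpha$ and show that if $\alpha$ and $\varphi^{(n)}(\alpha)$ are multiplicatively dependent for infinitely many $n$, then one extracts a multiplicative relation that forces $\varphi$ into the excluded form $\beta X^{\pm d}$. First I would set up notation: for each $n$ for which the dependence holds there exist integers $(k_n,\ell_n)\ne(0,0)$ with $\varphi^{(n)}(\alpha)^{k_n}=\alpha^{\ell_n}$ (absorbing a possible root of unity is not an issue here since $\alpha\notin\U$ and one can pass to the group generated by $\alpha$, or simply keep a root of unity on one side). The key observation is that the orbit $\Orb_\varphi(\alpha)$ grows: since $\alpha$ is not preperiodic and $\deg\varphi\ge 2$, the heights $\h(\varphi^{(n)}(\alpha))$ tend to infinity (standard canonical-height argument, or via Lemmas~\ref{lem:itergrowth Arch}--\ref{lem:itergrowth nonArch} after moving to a place where $\alpha$ is large). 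Hence for $n$ large the two numbers $\alpha$ and $\varphi^{(n)}(\alpha)$ are \emph{far} apart in height, and comparing heights in $\varphi^{(n)}(\alpha)^{k_n}=\alpha^{\ell_n}$ shows $|\ell_n|/|k_n|\asymp \h(\varphi^{(n)}(\alpha))/\h(\alpha)\to\infty$; in particular $k_n$ stays bounded, and after passing to an infinite subsequence we may assume $k_n=k$ is constant (and nonzero).

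Next I would exploit a place-by-place analysis to pin down $k$ and the exponent pattern, following the valuation computation in the proof of Theorem~\ref{thm: 2multdep-arb}. Choose a place $v$ (archimedean or not) at which $|\alpha|_v$ is large enough that Lemma~\ref{lem:itergrowth Arch}(i) or Lemma~\ref{lem:itergrowth nonArch} applies to a model of $\varphi$; along the orbit the $v$-adic size then behaves multiplicatively with factor $d$, i.e. $|\varphi^{(n)}(\alpha)|_v$ scales like $|\alpha|_v^{d^n}$ up to bounded correction. Plugging into $\varphi^{(n)}(\alpha)^{k}=\zeta_n\alpha^{\ell_n}$ and reading off the dominant term forces $\ell_n=k\,d^n + O(1)$ (in fact, after sharpening, $k\mid$ a bounded quantity and one gets $\ell_n=k d^n$ exactly for large $n$, possibly after adjusting $k$ to $1$ by the coprimality trick $ar+bt=1$ as in the earlier proof). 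So for infinitely many $n$ we have an honest identity
$$
\varphi^{(n)}(\alpha)=\zeta_n\,\alpha^{d^n}
$$
with $\zeta_n$ a root of unity (or more precisely $\varphi^{(n)}(\alpha)^{k}=\zeta_n\alpha^{kd^n}$ with $k$ fixed).

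Then the functional/algebraic step: having $\varphi^{(n)}(\alpha)=\zeta_n\alpha^{d^n}$ for two consecutive values $n$ and $n+1$ in the subsequence — or $\varphi^{(n)}(\alpha)$ and $\varphi^{(n+1)}(\alpha)=\varphi(\varphi^{(n)}(\alpha))$ — gives
$$
\varphi\!\left(\zeta_n\alpha^{d^n}\right)=\zeta_{n+1}\alpha^{d^{n+1}}=\zeta_{n+1}\zeta_n^{-d}\left(\zeta_n\alpha^{d^n}\right)^{d}.
$$
Since $\alpha$ is not preperiodic, the values $\beta_n:=\zeta_n\alpha^{d^n}$ are infinitely many distinct points, so the relation $\varphi(\beta_n)=c_n\beta_n^{d}$ with $c_n=\zeta_{n+1}\zeta_n^{-d}$ a root of unity holds for infinitely many distinct $\beta_n$; as the $c_n$ lie in a finite set (bounded degree roots of unity in $\K(\alpha)$, or one argues the ratio is forced constant), we get $\varphi(X)=cX^{d}$ as rational functions for some root of unity $c$, i.e. $\varphi=\beta X^{d}$ with $\beta\in\K^*$ — contradicting the hypothesis. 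The symmetric sub-case where it is $X^{-d}$ that appears (which happens when $\varphi(\alpha)$ behaves like $\alpha^{-d}$ at the chosen place) is handled the same way and yields $\varphi=\beta X^{-d}$, again excluded. I would remark that Lemma~\ref{lem:DZcond} or a direct inspection of zeros and poles can be used to convert "$\varphi(\beta_n)=c_n\beta_n^{\pm d}$ for infinitely many $\beta_n$" into the polynomial identity cleanly.

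The main obstacle I anticipate is the step extracting the \emph{exact} relation $\varphi^{(n)}(\alpha)=\zeta_n\alpha^{d^n}$ from the a priori only \emph{asymptotic} information $\ell_n=kd^n+O(1)$: one needs to rule out a bounded discrepancy in the exponent, and the clean way is the coprimality argument $ar+bt=1$ combined with the valuation identity $v_\fp(\varphi^{(n)}(\alpha))=d^n v_\fp(\alpha)$ (exactly as in the proof of Theorem~\ref{thm: 2multdep-arb}, equations~\eqref{eq:ord fm}--\eqref{eq:mult r1}), which forces $r=1$, $t=d^{m-n}$ on the nose. The only extra care compared to that proof is that here we do not assume $\alpha\in\K$, so the relevant place $\fp$ with $v_\fp(\alpha)<0$ must be found over $\K(\alpha)$ (and one should first dispose of the case that $\alpha$ is an $\cS$-unit for the natural finite set $\cS$ attached to $\varphi$, where instead one runs the archimedean growth lemma). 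Once the exact relation is in hand, the concluding identification of $\varphi$ is routine.
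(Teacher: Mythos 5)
Your proposal takes a genuinely different route from the paper, but it has a gap that is fatal in the stated generality. Theorem~\ref{thm: 2multdep-init} concerns a \emph{rational function} $\varphi\in\K(X)$, whereas your argument leans on the polynomial-specific growth machinery (Lemmas~\ref{lem:itergrowth Arch} and \ref{lem:itergrowth nonArch}) and the valuation identity $v_\fp(\varphi^{(n)}(\alpha))=d^n v_\fp(\alpha)$ that appears in the proof of Theorem~\ref{thm: 2multdep-arb}. That identity, and the statement that ``$|\varphi^{(n)}(\alpha)|_v$ scales like $|\alpha|_v^{d^n}$,'' are simply false for a non-polynomial $\varphi=f/g$: at a non-archimedean place where $|\alpha|_v$ is large and the coefficients are $v$-integral, one has $|\varphi(\alpha)|_v=|\alpha|_v^{\deg f-\deg g}$, and $\deg f-\deg g<d$ whenever $\deg g>0$ (and can even be negative, in which case the orbit contracts rather than grows at that place). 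So the central step of your plan --- forcing $\ell_n=k\,d^n$ on the nose and extracting the exact relation $\varphi^{(n)}(\alpha)=\zeta_n\alpha^{d^n}$ --- cannot be carried out as described for a general rational $\varphi$. Secondary issues: the claimed boundedness of $k_n$ before passing to a constant subsequence is asserted via a height ratio $|\ell_n|/|k_n|\to\infty$, which bounds the ratio but not $|k_n|$ itself (Lemma~\ref{lem:exponent} is not available here since $\alpha$ need not lie in $\Kab$); and the case where $\alpha$ has no place with $v_\fp(\alpha)<0$ outside a fixed bad set is waved off rather than handled.

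By contrast the paper's proof sidesteps all of this. It observes that a multiplicative relation between $\alpha$ and $\varphi^{(n)}(\alpha)$ is equivalent to $\varphi^{(n)}(\alpha)\in\ov\Gamma$, the division group of $\Gamma=\langle\alpha\rangle$; that every element of $\ov\Gamma$ is an $S$-unit for the fixed finite set $S$ of places at which $|\alpha|_v\ne1$ (together with the archimedean ones); and then it applies the uniform $S$-unit result of Krieger--Levin--Scherr--Tucker--Yasufuku--Zieve~\cite[Proposition~1.6(a)]{KLSTYZ} to conclude there are only finitely many values $\varphi^{(n)}(\alpha)$ landing in $R_S^*$. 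That proposition is exactly where the hypothesis $\varphi\ne\beta X^{\pm d}$ is consumed, and it is formulated for rational functions. Since $\alpha$ is not preperiodic, distinct values of $\varphi^{(n)}(\alpha)$ give distinct $n$, and finiteness follows. Your functional-equation endgame (``$\varphi(\beta_n)=c_n\beta_n^{d}$ for infinitely many distinct $\beta_n$ forces $\varphi=cX^d$'') is a nice idea and essentially correct once one has the exact relation, but the way you get to that relation does not survive the passage from polynomials to rational functions, which is precisely the regime Theorem~\ref{thm: 2multdep-init} is designed to cover.
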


\begin{proof} 
First, we can extend the field $\K$ to include also the element $\alpha$, and thus also all elements $\varphi^{(n)}(\alpha)$, $n\ge 1$.

Assume that there is a multiplicative relation, that is, there exists integers $k_1,k_2$, not both zero, such that 
\begin{equation}
\label{eq:alpha n}
\alpha^{k_1}\varphi^{(n)}(\alpha)^{k_2}=1
\end{equation}
for some $n\ge 1$. Since $\alpha$ is not a root of unity, we have $k_2 \ne 0$ in~\eqref{eq:alpha n}.  

If we denote by $\Gamma$ the group generated by $\alpha$ in $\K^*$, 
the equation~\eqref{eq:alpha n} is equivalent to $\varphi^{(n)}(\alpha)\in\ov\Gamma$, where $\ov\Gamma$ is the so-called division group of $\Gamma$ defined by 
$$
\ov\Gamma = \{a \in \ov\Q^*: \, \textrm{$a^m \in \Gamma$ for some positive integer $m$} \}. 
$$
We now apply a version~\cite[Corollary~2.3]{OstShp} (which is based on~\cite{KLSTYZ}) 
to conclude that there are finitely many values $\varphi^{(n)}(\alpha)$ satisfying~\eqref{eq:alpha n} when $n$ varies. 
Note that although~\cite[Corollary~2.3]{OstShp}  is stated only for polynomials,  the same argument works for rational functions satisfying our condition as well, and also under a more relaxed condition. 
Indeed, let $S$ be a finite set of places of $\K$, including the Archimedean ones, such that $|\alpha|_v=1$ for any  $v\not\in S$. 
So, $|\beta|_v=1$ for any $\beta \in\Gamma$ and $v\not\in S$. Now, for any $\gamma \in\ov\Gamma$, that is $\gamma^m\in\Gamma$ for some positive integer $m$, 
we also  have $|\gamma|_v=1$ for any $v\not\in S$. Thus, $\varphi^{(n)}(\alpha)\in R_S^*$, 
where $R_S^*$ is the ring of $S$-units in $\K$, and now the finiteness of the corresponding set of values $\varphi^{(n)}(\alpha)$ follows 
from~\cite[Proposition~1.6,~(a)]{KLSTYZ}. 

Since $\alpha$ is non-preperiodic, the points in the orbit $\Orb_\varphi(\alpha)$ are all distinct. 
This implies that there are finitely many $n\ge 1$ satisfying~\eqref{eq:alpha n}. So, we complete the proof. 
\end{proof}

\section*{Acknowledgement}
The authors 
 are grateful to David Masser  for valuable discussions around Lemma~\ref{lem:mult-Kab} and 
 to Gabriel Dill for pointing out some errors in an early version of the paper. 
They also would like to thank the referee for a careful reading and valuable comments. 

The first, the third and the fourth authors are also grateful to the Fields Institute for the hospitality and generous 
support during the {\it Thematic Program on Unlikely Intersections, Heights, and Efficient Congruencing\/}, where some parts of this paper were developed.

During the preparation of this work, A.~O. was supported by the UNSW FRG Grant PS43704 and 
by the ARC Grant~DP180100201, M.~S. was supported by the Macquarie University Research Fellowship,  
and I.~S.   was  supported   by the ARC Grants~DP170100786 and DP180100201.

\end{document}